\theoremstyle{plain}
\newtheorem{thm}{Theorem}[section]
\newtheorem{prop}[thm]{Proposition}
\newtheorem{lem}[thm]{Lemma}
\theoremstyle{definition}
\newtheorem{example}[thm]{Example}
\numberwithin{equation}{section}
\newcommand{\ud}{\mathrm{d}}
\newcommand{\RR}{\mathbb{R}}
\newcommand{\f}{\frac}
\newcommand{\pppp}[4]%
  {\frac{\partial^3{#1}}{\partial{#2}\partial{#3}\partial{#4}}}
\newcommand{\p}{\phi}
\newcommand{\gab}{\alpha\phi \Big(b^2,\frac{\beta}{\alpha}\Big )}
\newcommand{\pt}{\phi_2}
\newcommand{\po}{\phi_1}
\newcommand{\ptt}{\phi_{22}}
\newcommand{\pot}{\phi_{12}}
\renewcommand{\a}{\alpha}
\renewcommand{\b}{\beta}
\newcommand{\ab}{(\alpha,\beta)}
\newcommand{\ba}{\bar\alpha}
\newcommand{\bb}{\bar\beta}
\newcommand{\aij}{a_{ij}}
\newcommand{\bi}{b_i}
\newcommand{\bj}{b_j}
\newcommand{\bij}{b_{i|j}}
\newcommand{\baij}{\bar a_{ij}}
\newcommand{\bbij}{\bar b_{i|j}}
\newcommand{\G}{{}^\alpha G^i}
\newcommand{\bG}{{}^{\bar\alpha}G^i}
\newcommand{\Ric}{{}^\alpha\mathrm{Ric}}
\newcommand{\hRic}{{}^{\hat{\alpha}}{\mathrm{Ric}}}
\newcommand{\bRic}{{}^{\bar{\alpha}}{\mathrm{Ric}}}
\begin{document}
\title{On a class of Einstein  Finsler metrics}
\footnotetext{\emph{Keywords}: general $\ab$-metric, Einstein metric, Douglas metric.
\\
\emph{Mathematics Subject Classification}: 53B40, 53C60.}
\author{Zhongmin Shen\footnote{supported in part by a NSF grant
(DMS-0810159) and NSFC(No.11171297)} \;   and Changtao Yu~\footnote{supported by a NSFC grant(No.11026097)}}
\maketitle


\begin{abstract}
In this paper, we study a class of Finsler metrics called general $\ab$-metrics, which are defined by a Riemannian metric and an $1$-form. We construct some general $\ab$-metrics with constant Ricci curvature.
\end{abstract}

\section{Introduction}

Recently many forms of Einstein Finsler metrics have been either classified or constructed in Finsler geometry.  These metrics are defined in the following form:
$$ F=\alpha \phi \Big (\frac{\beta}{\alpha}\Big )$$
where $\alpha =\sqrt{a_{ij}(x)y^iy^j}$ is a Riemannian metric and $\beta=b_i y^i$ is an $1$-form on a manifold. Such metrics are called  {\it  $(\alpha,\beta)$-metrics}. The {\it Randers metrics} $F=\alpha+\beta$ are among the simplest ones. To study the non-Riemannian geometric properties of Finsler metrics, one usually begins with a Randers metric.
 Another important metric $F=(\alpha+\beta)^2/\alpha$  is the so-called {\it square metric}. It also has some important geometric properties.

A Randers metric can be expressed in the following navigation form:
\begin{equation}
F = \frac{\sqrt{(1-b^2)\alpha^2+\beta^2 }}{1-b^2} +\frac{\beta}{1-b^2},  \label{Randersmetric}
\end{equation}
where $b=b(x):= \|\beta_x\|_{\alpha}$. Bao-Robles have shown that $F$ is an Einstein metric with Ricci constant $K$ if and only if
\begin{equation}
\Ric = (n-1)\mu \alpha^2, \ \ \ \ \ \ \frac{1}{2}(b_{i|j}+b_{j|i})  =  c a_{ij},\label{RandersC}
\end{equation}
where  $c$ is a constant with $c^2 =4 (\mu-K)$ (\cite{db-robl-onri}). Thus
 Einstein Randers metrics with Ricci constant $K=1, 0, -1$  can be classified upto the classification of Riemannian Einstein metrics and homothetic $1$-forms.


A square metric can be expressed in the following form
\begin{equation}
F = \frac{(\sqrt{(1-b^2)\alpha^2+\beta^2  } + \beta)^2}{(1-b^2)^2 \sqrt{(1-b^2)\alpha^2+\beta^2} }.\label{squaremetric2}
\end{equation}
It has been shown that $F$ is an Einstein metric if and only if
\begin{equation}
\Ric = 0, \ \ \ \ \ \ b_{i|j} = c a_{ij}, \label{squareC2}
\end{equation}
where $ c $ is a constant (\cite{szm-yct}). In this case, it must be Ricci-flat.
By (\ref{squareC2}), one can  completely determine the local structure of Ricci-flat square metrics
(\cite{cxy-tyf}, \cite{sevim-szm-zll}, \cite{cb-szm-zll}, \cite{zrrr}, \cite{szm-yct}).

One is wondering if there are other types of Einstein $(\alpha,\beta)$-metrics. So far, we only know that if an Einstein  $(\alpha,\beta)$-metric is defined by a non-linear polynomial $\phi =\sum_{i=1}^k a_i s^i$, then it must be Ricci-flat (\cite{cxy-szm-tyf}).

Strictly speaking, the Finsler metrics in the form (\ref{Randersmetric}) and (\ref{squaremetric2}) are not $(\alpha,\beta)$-metrics. They belong to a larger class of the so-called {\it general $(\alpha,\beta)$-metrics}, which are  defined in the following form
$$F=\alpha \phi \Big (b^2, \; \frac{\beta}{\alpha} \Big ),$$
where $\a$ is a Riemannian metric, $\b$ is an $1$-form, $b: =\|\beta_x\|_{\alpha}$ and $\p(b^2,s)$ is a smooth function.
The notion of general $(\alpha,\beta)$-metrics is  proposed by the second author as a generalization of Randers metrics from the geometric point of view\cite{yct-zhm-onan}.
One of the reasons to consider  general $(\alpha,\beta)$-metrics is that this is a rich class  which contains many interesting Einstein metrics with non-zero  Ricci constant.

Let
\begin{equation}
\phi= \frac{\sqrt{1-b^2+s^2}}{1-b^2} + \frac{s}{1-b^2}.\label{Randersmetric*}
\end{equation}
 The  Randers metric in (\ref{Randersmetric}) can be expressed in the form  $F = \alpha \phi(b^2, \beta/\alpha)$.


Let
\begin{equation}
\phi = \frac{(\sqrt{1-b^2+s^2}+s)^2 }{(1-b^2)^2 \sqrt{1-b^2 +s^2 }}. \label{squaremetric2*}
\end{equation}
The square metric in (\ref{squaremetric2}) can be expressed in the form $F= \alpha \phi(b^2, \beta/\alpha)$.

 We notice that the above $\phi$ in (\ref{Randersmetric*}) and (\ref{squaremetric2*})
 satisfies the following PDE:
\begin{equation}
\phi_{22} = 2 (\phi_1-s \phi_{12}).  \label{pde***}
\end{equation}
Here  $\phi_1$ means the derivation of $\phi$ with respect to the first variable $b^2$.  This is indeed an amazing phenomenon.

Our search on Einstein Finsler metrics in this paper is initiated from  the  square metrics expressed in (\ref{squaremetric2*}) and the above PDE (\ref{pde***}) satisfied by the square metrics.
We shall make the following  assumptions:
\begin{enumerate}
\item[{\bf A1}]: the Riemannian metric $\a$ is an Einstein metric with Ricci constant $\mu$, and $\b$ is an $1$-form satisfying
\begin{equation} \label{aE}
\Ric=(n-1)\mu\a^2, \ \ \ \ \ \bij=c  \aij,
\end{equation}
where $c=c(x)$ is a  scalar function with $c^2 = \kappa-\mu b^2>0$ for some constant $\kappa$;
\item[{\bf A2}]: the function  $\p= \p(b^2,s)$ satisfies the following PDE,
\begin{eqnarray}\label{pde1}
\ptt=2(\po-s\pot).
\end{eqnarray}
\end{enumerate}

Note that if $\alpha$ and $\beta$ satisfy (\ref{aE}) with $ c=0$, then $\beta$ is parallel with respect to $\alpha$, in particular, $ b= constant$. In this case, $F$ is actually an $(\alpha,\beta)$-metric. We  shall only consider the case when $c^2=\kappa-\mu b^2 >0$.

The local structure of $\alpha$ and $\beta$ satisfying (\ref{aE}) can be classified  up to the classification of Riemannian Einstein metrics (Proposition \ref{prop2.2}).
The two equations (\ref{aE}) and (\ref{pde1}) imply that $F=\alpha \phi(b^2, \beta/\alpha)$ is a Douglas metric (Proposition \ref{Douglas}). Observe that if $\alpha$ is Ricci-flat and $\beta$ is parallel with respect to $\alpha$, then for any $\phi =\phi(b^2, s)$, the general $(\alpha,\beta)$-metric
$F = \alpha \phi(b^2, \beta/\alpha)$ is a Ricci-flat Finsler metric of Berwald type. This is a trivial case.
We should point out that there are Einstein metrics in the general $(\alpha,\beta)$-metric form $F =\alpha \phi(b^2, \beta/\alpha)$ whose defining function $\phi$ does not satisfy (\ref{pde1}), even though $\a$ and $\b$ satisfy (\ref{aE}). See Section \ref{section4} below. In this paper, we shall find some non-trivial Einstein general $(\alpha,\beta)$-metrics satisfying (\ref{aE}) and (\ref{pde1}).

\begin{thm}\label{main1}
Let $F=\gab$ be a general $\ab$-metric on an $n$-dimensional manifold $M$ with $n\geq3$, where $\a$, $\b$ and $\p$ satisfy (\ref{aE})-(\ref{pde1}).  $F$ is an Einstein metric with Ricci constant $K$ if and only if the function $\p=\p(b^2,s)$ satisfies the following PDE:
\begin{eqnarray}\label{pde2}
( \kappa -\mu b^2) \left[\psi^2-(\psi_2+2s\psi_1)\right]+\mu s \psi + \mu =K\p^2,
\end{eqnarray}
where $\psi:=\frac{\pt+2s\po}{2\p}$.
\end{thm}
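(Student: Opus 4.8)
\medskip

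\noindent\emph{Proof strategy.} The plan is to realize the spray of $F$ as a projective deformation of the spray of $\a$, reduce it to a very simple shape using (\ref{aE})-(\ref{pde1}), then expand the Ricci curvature and compare it with $(n-1)KF^2$.

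First I would unwind (\ref{aE}). Since $\bij=c\aij$ is symmetric, $\b$ is closed, so $\sij=0$ and hence $s_0=s^i{}_0=\si=0$; also $r_{i0}=c\,y_i$, $r_{00}=c\,\a^2$, $r_0=c\,\b$, $\ri=c\,\bi$ and $r=c\,b^2$. Differentiating $c^2=\kappa-\mu b^2$ and using $\bij=c\aij$ — legitimate because $c\neq0$ by $c^2=\kappa-\mu b^2>0$ — yields what I expect to be the key identity of the whole computation,
\begin{equation}\label{cderiv}
c_{|i}=-\mu\,\bi .
\end{equation}
Feeding all of this into the formula for the spray coefficients of a general $\ab$-metric (\cite{yct-zhm-onan}), the terms built from $\sij,\si$ drop out, and the terms built from $\rij,\ri,r$ collapse to a multiple of $\a\,y^i$ plus a multiple of $\a^2 b^i$ whose coefficient is a nonzero multiple of $\ptt-2(\po-s\pot)$ — and that is annihilated exactly by (\ref{pde1}). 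What remains is the Douglas normal form of Proposition \ref{Douglas},
\begin{equation}\label{spray}
G^i=\G+P\,y^i ,\qquad P:=c\,\a\,\psi ,\qquad \psi=\frac{\pt+2s\po}{2\p},
\end{equation}
with $P$ positively homogeneous of degree one in $y$.

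Given (\ref{spray}), I would then use the fact that under a projective change $\G\mapsto\G+P\,y^i$ the Ricci scalar satisfies $\mathrm{Ric}_F=\Ric+\mathcal{E}$, where $\mathcal{E}$ is a universal expression in $P$ and its first horizontal and vertical derivatives with respect to the connection of $\a$ (this follows by expanding the definition of $\mathrm{Ric}$). Now $P=c\,\a\,\psi$ with $\psi=\psi(b^2,s)$ and $s=\b/\a$, so each horizontal derivative reduces — via (\ref{cderiv}), $(b^2)_{|k}=2c\,\bk$ and $\b_{|k}=c\,y_k$ — to an explicit algebraic expression in $\a,\b,b^2,c,\mu,\psi,\psi_1,\psi_2$; for instance $P_{|0}=\a^2\big[c^2(\psi_2+2s\psi_1)-\mu s\psi\big]$, and the vertical derivatives of $P$ are algebraic in the same quantities. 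Substituting these and using $\Ric=(n-1)\mu\a^2$, I expect all the pieces carrying $c_{|k}$ and $(b^2)_{|k}$ to recombine — after eliminating $c^2$ by $c^2=\kappa-\mu b^2$ — into
\begin{equation}\label{Ricfinal}
\mathrm{Ric}_F=(n-1)\Big\{(\kappa-\mu b^2)\big[\psi^2-(\psi_2+2s\psi_1)\big]+\mu\,s\,\psi+\mu\Big\}\a^2 .
\end{equation}
Since $F$ is Einstein with Ricci constant $K$ exactly when $\mathrm{Ric}_F=(n-1)KF^2=(n-1)K\a^2\p^2$, and $\a^2$ is a common nonvanishing factor, (\ref{Ricfinal}) is equivalent to (\ref{pde2}); the standing assumption $n\ge3$ is used in the course of this reduction.

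I expect the main obstacle to be the bookkeeping at the two places where a cancellation is essential: verifying that (\ref{pde1}) is precisely the identity killing the $\a^2 b^i$-component of $G^i-\G$, and verifying in the Ricci expansion that every occurrence of $c_{|k}$, $(b^2)_{|k}$ and $\Ric$ conspires — through (\ref{cderiv}) and the Einstein condition on $\a$ — to produce the compact right-hand side of (\ref{Ricfinal}). Both are long but entirely mechanical covariant computations; once (\ref{cderiv}) is available, no new idea is required.
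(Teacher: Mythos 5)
Your proposal is correct and follows essentially the same route as the paper: reduce the spray to the projective form $G^i={}^\a G^i+c\a\psi y^i$ via Proposition \ref{Douglas}(a), then apply the projective-change formula $\mathrm{Ric}_F={}^\a\mathrm{Ric}+(n-1)(P^2-P_{|m}y^m)$ with $P=c\a\psi$, using $c_{|i}=-\mu b_i$, $(b^2)_{|k}=2cb_k$ and $\b_{|k}=cy_k$ to obtain exactly the paper's expression $(n-1)\a^2\{\mu+(\kappa-\mu b^2)[\psi^2-(\psi_2+2s\psi_1)]+\mu s\psi\}$. Your intermediate identity $P_{|0}=\a^2[c^2(\psi_2+2s\psi_1)-\mu s\psi]$ checks out, and dividing by $\a^2$ against $(n-1)K\a^2\p^2$ gives (\ref{pde2}) just as in the paper.
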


Notice that the function $\phi$ in (\ref{Randersmetric*}) satisfies (\ref{pde1}) and (\ref{pde2}) with $\mu=0$ and $\kappa = -4 K$; and
 the function $\phi$ in (\ref{squaremetric2*}) satisfies (\ref{pde1}) and  (\ref{pde2}) with $ \mu=0$ and $K=0$.  Thus there are non-trivial solutions to (\ref{pde1}) and (\ref{pde2}) for suitable constants $\kappa, \mu$ and $K$. In this paper we shall try to solve (\ref{pde1}) and (\ref{pde2}) in a nonconventional way.

\begin{thm}\label{main2} Let $F=\gab$ be a general $\ab$-metric on an $n$-dimensional manifold $M$ with $n\geq3$, where $\a$ and $\b$  satisfy (\ref{aE}) with $\mu=0$  and $\phi =\phi(b^2,s)$ satisfies (\ref{pde1}). Then $F =\alpha\phi(b^2, \beta/\alpha)$ is Einstein with Ricci constant $K$ if and only if $\phi$ is given by one of the forms:
\begin{eqnarray}
\phi & = & \frac{1}{2\sqrt{-\sigma}} \frac{1}{\sqrt{C-b^2+s^2}+s   }\\
\phi & = & \frac{q(u)}{q(u)^2 (D q(u)+ s)^2+\sigma}, \ \ \ \ u:=b^2-s^2,
\end{eqnarray}
where $\sigma: = K/c^2$ and $q=q(u)$ satisfies the following equation:
\[  D^2  q^4+(u-C)q^2 -\sigma=0,\]
where $C$ ad $D$ are constants.
\end{thm}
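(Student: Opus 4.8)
The plan is to convert the Einstein condition into an explicit PDE via Theorem \ref{main1}, then to change variables so that it becomes --- for each frozen value of a new parameter --- an ODE in one variable that can be integrated in closed form, and finally to impose the standing equation (\ref{pde1}) to remove the remaining freedom. Since $\mu=0$, assumption {\bf A1} forces $c^2=\kappa$, so $c$ is a nonzero constant and $\sigma:=K/c^2=K/\kappa$. By Theorem \ref{main1}, under (\ref{aE})--(\ref{pde1}) with $\mu=0$ the equation (\ref{pde2}) reduces to
\begin{equation}\label{eqE}
\psi^2-\big(\psi_2+2s\psi_1\big)=\sigma\,\p^2,\qquad \psi=\frac{\pt+2s\po}{2\p},
\end{equation}
so the task is to describe all $\p=\p(b^2,s)$ satisfying both (\ref{pde1}) and (\ref{eqE}).

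First I would pass to the variables $(s,u)$ with $u:=b^2-s^2$, regarding $\p$ as $\p=\p(s,u)$ and writing $\p_s,\p_u,\p_{ss},\p_{su},\dots$ for the partial derivatives in these new variables. A chain-rule computation gives $\po=\p_u$ and, crucially, $\pt+2s\po=\p_s$; hence $\psi=\p_s/(2\p)$ and, applying the same identity to $\psi$, $\psi_2+2s\psi_1=\psi_s$. The same computation turns (\ref{pde1}) into
\begin{equation}\label{eqP}
\p_{ss}-2s\,\p_{su}-4\p_u=0,
\end{equation}
and turns (\ref{eqE}) into $\psi^2-\psi_s=\sigma\p^2$, in which $u$ now enters only as a parameter. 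Setting $w:=1/\p>0$ gives $\psi=-w_s/(2w)$, so the Einstein equation reads $2ww_{ss}-w_s^2=4\sigma$; differentiating in $s$ and using $w>0$ gives $w_{sss}=0$. Hence $w$ is a quadratic in $s$, $w=E(u)s^2+2A(u)s+B(u)$, and substituting back forces $EB-A^2=\sigma$. Therefore (\ref{eqE}) by itself is equivalent to
\begin{equation}\label{eqQ}
\p=\frac{1}{E(u)\,s^2+2A(u)\,s+B(u)},\qquad E(u)B(u)-A(u)^2=\sigma.
\end{equation}

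It then remains to substitute (\ref{eqQ}) into (\ref{eqP}), clear the denominator, and compare coefficients of $s^3,s^2,s^1,s^0$; using $EB-A^2=\sigma$ this produces a small system of ODEs in $u$ for $E,A,B$. The $s^3$-coefficient reads $EA'=2AE'$, so either $E\equiv0$ or $A=DE^2$ for a constant $D$. In the first case the remaining equations force $A^2=-\sigma$ (so $\sigma<0$ and $A$ is constant) and $(B^2)'=4\sigma$; integrating and renaming the constant gives, up to replacing $\b$ by $-\b$, the first listed form. In the second case ($A=DE^2$), the $s^1$-coefficient gives $B=-E^2/(2E')$ --- the sub-cases $D=0$ and $E'\equiv0$ being disposed of directly and falling under the same conclusion --- and then the constraint $EB-A^2=\sigma$ becomes a separable first-order ODE for $E$ whose first integral is $D^2E^4+(u-C)E^2-\sigma=0$ for a constant $C$; writing $q:=E$, $A=Dq^2$, $B=(C-u)q+2\sigma/q$ turns (\ref{eqQ}) into the second listed form. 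One then verifies that the $s^2$- and $s^0$-coefficients vanish identically, so nothing is lost; conversely each $\p$ in the two families plainly satisfies (\ref{pde1}) and (\ref{eqE}), hence is Einstein with Ricci constant $K$ by Theorem \ref{main1}.

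The main obstacle is this last step: one must organize the case analysis of the four coefficient ODEs carefully so that no degenerate branch ($E\equiv0$, $A\equiv0$, $E'\equiv0$) is overlooked, and must recognize the first integral of the separable ODE for $E$ as exactly the quartic relation defining $q$. These computations are elementary but bulky. Along the way one also keeps track of the sign of $\sigma$ (the first family requires $\sigma<0$, i.e.\ $K<0$) and restricts $u$ to a range on which the relevant quadratic or quartic has a positive root, so that $\p>0$ and $F=\gab$ is a genuine Finsler metric.
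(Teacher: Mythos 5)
Your proposal is correct and follows essentially the same route as the paper's proof: the same change of variables $(u,v)=(b^2-s^2,\,s)$, integration of the Einstein equation as an ODE in $v$ with $u$ as a parameter, and then a coefficient-by-coefficient analysis in powers of $s$ of the transformed equation (\ref{pde1}), leading to the same first integral $D^2q^4+(u-C)q^2-\sigma=0$. The only cosmetic differences are that you work with $w=1/\phi$ and deduce $w_{sss}=0$ where the paper works with $1/\sqrt{\phi}$ and uses reduction of order, and that your single quadratic ansatz $Es^2+2As+B$ with $EB-A^2=\sigma$ merges the paper's two cases ($q=0$ and $q\neq0$) into one parametrization.
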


The general case when $\alpha$ and $\beta$ satisfy (\ref{aE}) with  $\mu\neq0$ and $\kappa\neq0$ can be simplified to the above case by appropriate transformations of $\alpha,\beta$ and $\phi$. The case when $\mu\neq0$ and $\kappa=0$ is very special, in this case the main method of deformation in this paper is out of work partly. Maybe it is trivial and we don't need to discuss it. See Section \ref{section4} for details.

\section{Preliminaries}

In Theorem \ref{main1}, we assume that the scalar function $c=c(x)$ is given by $c^2=\kappa -\mu b^2$ for some constant $\kappa$. This condition is justified in the following

\begin{lem}\label{lem2.1} Suppose that
$\alpha$ and $\beta$ satisfy
\begin{eqnarray}
   \Ric & =  & (n-1)\mu \a,\label{a**}\\
 b_{i|j} & =  & c a_{ij}\label{bij**}
\end{eqnarray}
for some constant $\mu$ and scalar function $c = c(x)$. Then
\begin{equation}
c^2 = \kappa - \mu b^2, \label{cmu*}
\end{equation}
\end{lem}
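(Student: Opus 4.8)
The plan is to differentiate the hypothesis $\bij = c\,\aij$ once more covariantly and substitute the result into the Ricci commutation identity for the $1$-form $\beta$; the Einstein condition on $\alpha$ will then pin down $dc$ in terms of $\beta$, and a direct computation of $d(b^2)$ finishes the argument. This is essentially the classical identity for a \emph{concircular} vector field, adapted to the present normalization. Concretely, since the metric coefficients are parallel ($a_{ij|k}=0$), differentiating (\ref{bij**}) gives $b_{i|j|k}=c_{,k}\,\aij$, where $c_{,k}:=\partial c/\partial x^k$. On the other hand, the Ricci identity for a covector with respect to the Levi-Civita connection of $\alpha$ reads $b_{i|j|k}-b_{i|k|j}=-\,b_m R^m{}_{ijk}$, with $R^m{}_{ijk}$ the Riemann curvature tensor of $\alpha$. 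Subtracting, one obtains
\[ c_{,k}\,\aij-c_{,j}\,\aik=-\,b_m R^m{}_{ijk}. \]

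Next I would trace this identity over $i$ and $j$, i.e. contract with $a^{ij}$. The left-hand side collapses to $(n-1)c_{,k}$, while the right-hand side becomes $\mp\,b_m R^m{}_k$ (one index of the Ricci tensor raised). Invoking the Einstein hypothesis (\ref{a**}) in the form $R_{mk}=(n-1)\mu\,a_{mk}$, together with $n\ge 2$, this yields the key relation
\[ c_{,k}=-\,\mu\, b_k. \]
Finally, since $b^2=a^{ij}b_ib_j$ and $a^{ij}$ is parallel, the (covariant) derivative of this scalar is
\[ \partial_k(b^2)=2\,b^{\,i}b_{i|k}=2c\,b_k,\qquad b^{\,i}:=a^{ij}b_j. \]
Combining the last two displays, $\partial_k(c^2)=2c\,c_{,k}=-2\mu c\,b_k=-\mu\,\partial_k(b^2)$, so that $\partial_k\!\left(c^2+\mu b^2\right)=0$ at every point of the connected manifold $M$. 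Hence $c^2+\mu b^2$ is a constant, which we name $\kappa$; this is exactly (\ref{cmu*}).

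The only delicate point is sign bookkeeping: one must carry the sign in the curvature contraction consistently with whatever convention is fixed for $R^m{}_{ijk}$ and for the Ricci identity, so that the Einstein condition enters as $c_{,k}=-\mu b_k$ and not $+\mu b_k$ — the latter would produce the wrong combination $c^2-\mu b^2$. (The standard sphere example, where $\beta=df$ with $f$ a height function, $c=-f$, $\mu=1$, and $|\nabla f|^2+f^2=1$, fixes the sign.) Once that is settled, everything else is a two-line computation.
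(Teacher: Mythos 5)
Your argument is correct and follows essentially the same route as the paper's own proof: apply the Ricci commutation identity to $b_{i|j|k}=c_{,k}a_{ij}$, contract with the metric to convert the curvature term into the Ricci tensor, invoke the Einstein condition to get $c_{,k}=-\mu b_k$, and then observe that $c^2+\mu b^2$ has vanishing derivative. The only differences are cosmetic (contracting over a different index pair and your explicit attention to the curvature sign convention and to connectedness), so nothing further is needed.
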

{\it Proof}:  By the Ricci identity and (\ref{bij**}), we get
\begin{equation}
  c_{|k} a_{ij} - c_{|j} a_{ik} = b_{i|j|k}- b_{i|k|j}
= b_m R^{\ m}_{i \ jk}.\label{Ricci_I}
\end{equation}
Contracting (\ref{Ricci_I}) with $a^{ik}$ yields
\begin{equation}
 -(n-1) c_{|j} = b_m Ric^m_{\ j} = (n-1) \mu b_j.\label{cb}
\end{equation}
Thus $ c_{|0}= -\mu \beta$.  By (\ref{bij**}) and (\ref{Ricci_I}), we get
\[  ( c^2+ \mu b^2)_{|j} = 2c c_{|j} + 2 \mu b^m b_{m|j}
= 2 cc_{|j} +2 c\mu b^m a_{mj}
= 2 c (c_{|j} + \mu b_j ) =0.\]
Thus $\kappa:= c^2+ \mu b^2 $ is a constant.

\bigskip
The pair $(\alpha,\beta)$ satisfying (\ref{a**})  and (\ref{bij**}) can be locally determined up to the classification of $(n-1)$-dimensional Riemannian Einstein metrics, by which we can understand the underlying geometrical meaning of the constant $\kappa$ clearly.

\bigskip

\begin{prop}\label{prop2.2} Let $\alpha$ be a Riemannian metric and $\beta$ an $1$-form on an $n$-dimensional manifold $M$.  $\alpha$ and $\beta$ satisfy (\ref{a**}) and (\ref{bij**}) respectively if and only if
$ \alpha$ is locally a warped product metric on $R\times \hat{M}$ and $\beta$ is an $1$-form defined by the first factor $R$,
\begin{eqnarray}
\alpha^2 & =& dt\otimes dt + h(t)^2 \hat{\alpha}^2\label{a*}\\
\beta & = & h(t) dt, \label{bij*}
\end{eqnarray}
where $h(t)$ satisfies
\[  h''(t)+\mu h(t) =0,\]
 and
$\hat{\alpha}$ is Einsteinian with
\[ \hRic = (n-2)\kappa \hat{\alpha}^2\]
where $\kappa:= h'(t)^2+\mu h(t)^2$ is a constant.
In this case, we have $c^2 =\kappa-\mu b^2$.
\end{prop}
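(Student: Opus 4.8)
The plan is to prove the non-trivial (forward) implication via the classical theory of concircular scalar fields, and then to obtain the converse by a direct warped-product computation. For the forward direction, since $b_{i|j}=c\,a_{ij}$ is symmetric in $i,j$, the $1$-form $\b$ is closed, so locally $\b=df$ for a function $f$, and (\ref{bij**}) reads $\nabla^2 f=c\,a$; thus $f$ is a concircular scalar field. Restricting to a neighbourhood of a point where $\nabla f\neq 0$ (equivalently $b=\|\b\|_\a\neq 0$), I would first record three elementary consequences: differentiating $b^2=|\nabla f|^2$ gives $\nabla(b^2)=2c\,\b$; equation (\ref{cb}) of Lemma \ref{lem2.1} gives $\nabla c=-\mu\,\b$; and $\nabla_{\nabla f}\nabla f=c\,\nabla f$. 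The first two say that $b^2$ and $c$ are functions of $f$ alone, and the third says that the unit field $N:=\nabla f/b$ is geodesic.

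Next I would pass to Fermi coordinates along the flow of $N$: the level sets $\{f=\mathrm{const}\}$ are common leaves $\hat M$, $N=\partial_t$ with $t$ an arclength parameter, and $\a^2=dt\otimes dt+g_t$ for a one-parameter family $g_t$ of metrics on $\hat M$. The second fundamental form of a leaf is $\langle\nabla_X N,Y\rangle=\tfrac1b\nabla^2 f(X,Y)=\tfrac cb\langle X,Y\rangle$, so the leaves are totally umbilic with umbilicity factor $\lambda(t):=c/b$, a function of $t$ only. Hence $\partial_t g_t=2\lambda(t)\,g_t$, which integrates to $g_t=h(t)^2\hat\a^2$ for a fixed metric $\hat\a$ on $\hat M$ and a positive function $h$ with $h'/h=\lambda$. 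A short computation gives $f'(t)=N(f)=b$ and $b'(t)=N(b)=c$, so $b'/b=c/b=h'/h$; after rescaling $\hat\a$ we may therefore take $h=b$, which yields $\b=df=h(t)\,dt$ and $h'=c$.

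It remains to extract the ODE for $h$ and the Einstein condition on $\hat\a$. From $h'=c$ and $c'(t)=N(c)=b^j c_{|j}/b=-\mu b=-\mu h$ (again by (\ref{cb})) we get $h''+\mu h=0$. Substituting $\a^2=dt^2+h^2\hat\a^2$ into the standard warped-product curvature formulas and imposing $\Ric=(n-1)\mu\a^2$, the $dt^2$-component reproduces $h''=-\mu h$, while the $\hat M$-component gives $\hRic=(n-2)\big((h')^2+\mu h^2\big)\hat\a^2=(n-2)\kappa\,\hat\a^2$ with $\kappa:=(h')^2+\mu h^2$; and $\kappa'=2h'(h''+\mu h)=0$, so $\kappa$ is constant and equals $c^2+\mu b^2$ as in Lemma \ref{lem2.1}. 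For the converse I would reverse this: given the warped product with $h''+\mu h=0$ and $\hRic=(n-2)\kappa\hat\a^2$, the warped-product Ricci identity gives $\Ric=(n-1)\mu\a^2$, and writing $\b=dH$ with $H'=h$ the warped-product Christoffel symbols give $\nabla^2 H=h'\a^2$, i.e. $b_{i|j}=c\,a_{ij}$ with $c=h'$, so that $c^2=\kappa-\mu b^2$.

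I expect the main obstacle to be the warped-product splitting itself — the passage from the pointwise equation $\nabla^2 f=c\,a$ to the local product structure $\a^2=dt^2+h(t)^2\hat\a^2$ with $t$-independent $\hat\a$. This is essentially Brinkmann's/Tashiro's theorem on concircular fields; the care needed is (i) to work away from the zero locus of $\nabla f$, which is why the statement is only local, and (ii) to verify that the integrated leaf metric $g_t/h(t)^2$ is genuinely independent of $t$ rather than merely pointwise conformal to a fixed one — here the umbilicity of the leaves (as opposed to a general shape operator) is exactly what is needed.
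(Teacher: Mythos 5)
Your proposal is correct and follows essentially the same route as the paper: reduce $b_{i|j}=c\,a_{ij}$ for the closed form $\b=df$ to $\mathrm{Hess}_\a f=c\,\a^2$, obtain the warped-product splitting $\a^2=dt\otimes dt+h(t)^2\hat\a^2$ with $\b=h\,dt$, and then read off $h''+\mu h=0$ and $\hRic=(n-2)\kappa\hat\a^2$ from the standard warped-product Ricci formula. The only difference is that the paper simply cites Petersen's notes for the splitting step, whereas you prove it directly via the geodesic unit field $N=\nabla f/b$ and the total umbilicity of the level sets — a correct and self-contained filling-in of exactly that cited lemma.
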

\noindent
{\it Proof}: Because $\beta$ is closed, we can assume that $\beta = df\not=0$ for some smooth functions $f$. Then (\ref{bij**}) is equivalent to
\begin{equation}
 {\rm Hess}_{\alpha} f = c \alpha^2.\label{Hess}
\end{equation}
Hence,
$\alpha^2= dt\otimes dt + h(t)^2 \hat{\alpha}^2$ is locally a warped product metric on $M =R\times \hat{M}$ and $\beta = h(t) dt$, where $ h(t)=f'(t)$ (\cite{pete}). By a direct computation, we have
\[  \Ric =\hRic -(n-1) \frac{h''}{h} (y^1)^2 - [ h'' h +(n-2) (h')^2] \hat{\alpha}^2.\]
By (\ref{a**}) and the above identity, we have
\[ \hRic = \Big \{  h'' h +(n-2) (h')^2+ (n-1)\mu h^2\Big \} \hat{\alpha}^2\]
\[  h''+\mu h =0.\]
Clearly,
$\kappa:= (h')^2+\mu h^2$ is a constant  and $ h'' h +(n-2) (h')^2+ (n-1)\mu h^2 = (n-2)\kappa$.

Assume that (\ref{a*}) and (\ref{bij*}) hold. Then $b^2= h(t)^2$.
By (\ref{Hess}),
$c = h'(t)$. Thus $c^2 = h'(t)^2 = \kappa - \mu h(t)^2 = \kappa -\mu b^2$. \qed

\bigskip

We now focus on general $(\alpha,\beta)$-metrics.
Let  $\phi=\phi(b^2,s)$ be  a smooth function defined on the domain $|s|\leq b<b_o$ for some positive number (maybe infinity) $b_o$. Define
$$ F=\gab$$
where $\alpha$  is a Riemannian metric  and  $\beta $ is an $1$-form  with $\|\beta\|_{\alpha} < b_o$ on a manifold $M$. It is easy to show that $F= \gab$ is a regular Finsler metric for any $\alpha$ and $\beta$ with $ b:=\|\beta\|_{\alpha} < b_o$
if and only if $\p(b^2,s)$ satisfies
\begin{eqnarray}\label{ppp}
\p-s\pt>0,\quad\p-s\pt+(b^2-s^2)\ptt>0,    \ \ \ \ \ 0\leq |s| \leq b < b_o
\end{eqnarray}
when $n\geq3$\cite{yct-zhm-onan}.

Let $\alpha=\sqrt{a_{ij}(x)y^iy^j}$  and $\beta= b_i(x)y^i$.
Denote the coefficients of the covariant derivative of
$\b$ with respect to $\a$ by $b_{i|j}$, and let
$$r_{ij}=\frac{1}{2}(b_{i|j}+b_{j|i}),~s_{ij}=\frac{1}{2}(b_{i|j}-b_{j|i}),
~r_{00}=r_{ij}y^iy^j,~s^i{}_0=a^{ij}s_{jk}y^k,$$
$$r_i=b^jr_{ji},~s_i=b^js_{ji},~r_0=r_iy^i,~s_0=s_iy^i,~r^i=a^{ij}r_j,~s^i=a^{ij}s_j,~r=b^ir_i.$$
It is easy to see that $\b$ is closed if and only if $s_{ij}=0$.

According to \cite{yct-zhm-onan}, the spray coefficients $G^i$ of a general $(\alpha,\beta)$-metric $F=\gab$ are related to the spray coefficients ${}^\a G^i$ of
$\a$ and given by
\begin{eqnarray}\label{Gi}
G^i&=&{}^\a G^i+\a Q s^i{}_0+\left\{\Theta(-2\a Q s_0+r_{00}+2\a^2
R r)+\a\Omega(r_0+s_0)\right\}\frac{y^i}{\a}\nonumber\\
&&+\left\{\Psi(-2\a Q s_0+r_{00}+2\a^2 R
r)+\a\Pi(r_0+s_0)\right\}b^i -\a^2 R(r^i+s^i),
\end{eqnarray}
where
$$Q=\frac{\pt}{\p-s\pt},\quad R=\frac{\po}{\p-s\pt},$$
$$\Theta=\frac{(\p-s\pt)\pt-s\p\ptt}{2\p\big(\p-s\pt+(b^2-s^2)\ptt\big)},
\quad\Psi=\frac{\ptt}{2\big(\p-s\pt+(b^2-s^2)\ptt\big)},$$
$$\Pi=\frac{(\p-s\pt)\pot-s\po\ptt}{(\p-s\pt)\big(\p-s\pt+(b^2-s^2)\ptt\big)},\quad
\Omega=\frac{2\po}{\p}-\frac{s\p+(b^2-s^2)\pt}{\p}\Pi.$$

Denote $G^i={}^\a G^i+Q^i$, then the Ricci curvature of $F$ are related to that of $\a$ and given by
\begin{eqnarray}\label{riccichange}
\mathrm{Ric}=\Ric+2Q^i{}_{|i}-y^jQ^i{}_{|j.i}+2Q^jQ^i{}_{.j.i}-Q^i{}_{.j}Q^j{}_{.i},
\end{eqnarray}
where ``$|$" and  ``$.$" denote the horizontal covariant derivative and vertical covariant derivative with respect to $\a$ respectively. The following proposition tells us that the general $(\alpha,\beta)$-metric $F =\alpha\phi(b^2, \beta/\alpha)$ satisfying (\ref{aE})-(\ref{pde1}) is projectively equivalent to $\alpha$, hence it is a Douglas metric.

\begin{prop}\label{Douglas}
Let $F=\gab$ be a Finsler metric. Suppose that $\b$ satisfies
\begin{equation}
b_{i|j} = c a_{ij}, \label{bcij}
\end{equation}
where $c=c(x)\not=0$ is a scalar function on $M$.
Then the following hold
\begin{enumerate}
\item[(a)] $F$ is projectively equivalent to $\alpha$ if and only if $\p(b^2,s)$ satisfies
\begin{equation}
\phi_{22}- 2 (\phi_1-s \phi_{12} ) =0.
\end{equation}
\item[(b)] $F$ is a Douglas metric if and only if there are two functions of $h_1(t)$ and $h_2(t)$ such that
\begin{equation}
\ptt-2(\po-s\pot) =2\{h_1(b^2)+h_2(b^2)s^2\}\{ \p-s\pt+(b^2-s^2)\ptt   \} . \label{pde1****}
\end{equation}

\end{enumerate}
\end{prop}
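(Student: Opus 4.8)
The plan is to start from the Ricci-type transformation formula (\ref{riccichange}) together with the explicit spray-coefficient formula (\ref{Gi}), specialized to the situation $b_{i|j}=c a_{ij}$. Under (\ref{bcij}) we have $r_{ij}=c a_{ij}$, $s_{ij}=0$, hence $s^i{}_0=0$, $s_0=0$, $s_i=0$, $s^i=0$, $r_{00}=c\alpha^2$, $r_i=c b_i$, $r_0=c\beta$, $r=c b^2$, and $r^i=c b^i$. Substituting these into (\ref{Gi}) collapses the deviation $Q^i=G^i-{}^\alpha G^i$ to the much simpler form
\begin{equation}
Q^i = c\,\alpha^2\Big\{ \big(\Theta(1+2R b^2)+\Omega\big)\frac{y^i}{\alpha^2} \Big\} + c\,\alpha^2\Big\{ \Psi(1+2R b^2)+\Pi \Big\} b^i - c\,\alpha^2 R\, b^i, \nonumber
\end{equation}
which one should rewrite as $Q^i = c\,\alpha\,\rho\, y^i/\alpha + c\,\alpha^2\,\lambda\, b^i$ for two scalar functions $\rho=\rho(b^2,s)$ and $\lambda=\lambda(b^2,s)$ built from $\phi$ and its derivatives. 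The first bracket is already of the form "scalar $\cdot\, y^i$", so only the $b^i$-terms need to be analyzed. The key computational lemma will be: (a) that $\lambda\equiv 0$ is exactly equivalent to the PDE $\phi_{22}-2(\phi_1-s\phi_{12})=0$; and (b) more generally that $\lambda$ is a function times $(\phi-s\phi_2+(b^2-s^2)\phi_{22})$, where that function has the form $h_1(b^2)+h_2(b^2)s^2$ precisely when the bracketed PDE expression $\phi_{22}-2(\phi_1-s\phi_{12})$ factors the stated way.

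For part (a): once $Q^i = c\,\alpha\,\rho\, y^i/\alpha + c\,\alpha^2\,\lambda\, b^i$, the metric $F$ is projectively equivalent to $\alpha$ iff $Q^i = P\, y^i$ for some scalar $P$ on $TM\setminus\{0\}$, i.e. iff the $b^i$-coefficient vanishes identically (here one uses $n\geq 2$ and that $b^i$ is not proportional to $y^i$ generically). Thus $F\sim\alpha$ projectively $\iff \lambda(b^2,s)=0$. It then remains to simplify $\lambda$. Plugging in the definitions of $\Theta,\Psi,\Pi,\Omega,R$ and clearing the common denominators $\phi$ and $\phi-s\phi_2+(b^2-s^2)\phi_{22}$, a direct but lengthy simplification should show that $\lambda$ is a nonzero universal rational factor times $\big[\phi_{22}-2(\phi_1-s\phi_{12})\big]$ — this is the heart of the matter, and it is consistent with the paper's earlier remark that the $\phi$'s in (\ref{Randersmetric*}) and (\ref{squaremetric2*}) satisfy (\ref{pde***}) and give projectively flat (relative to $\alpha$) metrics. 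So $\lambda=0 \iff \phi_{22}-2(\phi_1-s\phi_{12})=0$, which is (a).

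For part (b): $F$ is a Douglas metric iff the Douglas curvature vanishes, and for a metric of the form $G^i = {}^\alpha G^i + \rho\, y^i + \text{(other)}$ the $\rho\,y^i$ part never contributes to the Douglas tensor, so $F$ is Douglas iff the $b^i$-term $c\,\alpha^2\lambda\, b^i$ is itself a Douglas-type deviation, i.e. $G^i - {}^\alpha G^i - (\text{scalar})y^i = c\,\alpha^2\lambda\, b^i$ must have the form $\tfrac12 \frac{\partial P}{\partial y^j}y^j y^i + $ something of Douglas type; concretely, for an $(\alpha,\beta)$-type deviation $T^i = \tau(b^2,s)\alpha^2 b^i$ with $b_{i|j}=ca_{ij}$, the standard criterion (as in the Douglas-metric literature for $(\alpha,\beta)$-metrics) is that $T^i$ is Douglas iff $\tau$ has a special polynomial dependence on $s$; tracing this through, $c\,\alpha^2\lambda\, b^i$ is Douglas iff $\alpha^2\lambda = (\text{scalar in }b^2)\cdot\alpha^2 + (\text{scalar in }b^2)\cdot\beta^2$ after accounting for the $y^i$-part, i.e. iff $\lambda = h_1(b^2)+h_2(b^2)s^2$ for some functions $h_1,h_2$. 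Combining with the identity from (a) that $\lambda = (\text{universal factor})\cdot\big[\phi_{22}-2(\phi_1-s\phi_{12})\big]/\big(\phi-s\phi_2+(b^2-s^2)\phi_{22}\big)$, and bookkeeping the universal factor, yields precisely (\ref{pde1****}).

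The main obstacle I expect is the bulk algebraic simplification in part (a): showing that after substituting all of $\Theta,\Psi,\Pi,\Omega,R,Q$ into the $b^i$-coefficient of $Q^i$ and clearing denominators, everything collapses to a clean constant multiple of $\phi_{22}-2(\phi_1-s\phi_{12})$ over $\phi-s\phi_2+(b^2-s^2)\phi_{22}$. This requires careful handling of the $\phi_1$ (i.e. $b^2$-derivative) terms, which enter through $R$, $\Pi$, and $\Omega$, and there is real risk of sign or bookkeeping errors; a good sanity check is to verify the final identity against the explicit $\phi$'s in (\ref{Randersmetric*}) and (\ref{squaremetric2*}). A secondary subtlety is invoking the correct Douglas-metric criterion for the $b^i$-deviation in part (b) and making sure the $\rho\, y^i$ piece and any $\alpha^2 b^i$ versus $\beta y^i$ reshuffling is tracked, so that the polynomial-in-$s$ condition comes out as exactly $h_1(b^2)+h_2(b^2)s^2$ and not some other degree.
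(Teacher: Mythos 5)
Your proposal follows essentially the same route as the paper's proof: substitute $r_{ij}=ca_{ij}$, $s_{ij}=0$ into the spray formula (\ref{Gi}), decompose $G^i-{}^\alpha G^i$ into a $y^i$-part and a $b^i$-part, and observe that the $b^i$-coefficient collapses to $c\alpha^2\,\frac{\phi_{22}-2(\phi_1-s\phi_{12})}{2\left(\phi-s\phi_2+(b^2-s^2)\phi_{22}\right)}$, whose vanishing gives (a) and whose having the form $h_1(b^2)+h_2(b^2)s^2$ (via the polynomiality of $G^iy^j-G^jy^i$) gives (b). One minor slip: in your displayed $Q^i$ the terms coming from $r_0+s_0=c\beta=cs\alpha$ should carry a factor of $s$ (i.e.\ $s\Omega$ and $s\Pi$), but this does not affect the structure of the argument.
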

\begin{proof}
By (\ref{bcij}), we have
\begin{eqnarray}\label{tu}
r_{00}=c\a^2,r_0=c\b,r=cb^2,r^i=cb^i,s^i{}_0=0,s_0=0,s^i=0.
\end{eqnarray}
Substituting (\ref{tu}) into (\ref{Gi}) yields
\begin{eqnarray}
G^i&=&{}^\a G^i+c\a\left\{\Theta(1+2Rb^2)+s\Omega\right\}y^i+c\a^2\left\{\Psi(1+2Rb^2)+s\Pi-R\right\}b^i\\
&=&{}^\a G^i+c\a\left\{\frac{\pt+2s\po}{2\p}
-\frac{\big(\ptt-2(\po-s\pot)\big)\big(s\p+(b^2-s^2)\pt\big)}
{2\p\big(\p-s\pt+(b^2-s^2)\ptt\big)}\right\}y^i\nonumber\\
&&+c\a^2\left\{\frac{\ptt-2(\po-s\pot)}{2\big(\p-s\pt+(b^2-s^2)\ptt\big)}\right\}b^i.\label{gp}
\end{eqnarray}

(a)
Assume that $\phi$ satisfies (\ref{pde1}). Then
\begin{eqnarray}\label{GP}
G^i={}^\a G^i+c\a\frac{\pt+2s\po}{2\p}y^i,
\end{eqnarray}
hence $F$ is projectively equivalent to $\a$.

Assume that $F$ is projectively equivalent to $\a$. Then
\begin{eqnarray}\label{gp2}
G^i={}^\a G^i+Py^i
\end{eqnarray}
for some function $P$. Notice that $b^i$ is a constant vector for a given point but $y^i$ is variable, so it is easy to see by (\ref{gp}) and (\ref{gp2}) that (\ref{pde1}) must be hold when $\b$ is not parallel with respect to $\a$.

(b)  Assume that $\phi$ satisfies (\ref{pde1****}). Then
$$
G^i={}^\a G^i+c\a\left\{\frac{\pt+2s\po}{2\p}
- h(b^2)\big(s\p+(b^2-s^2)\pt\big)\right\}y^i+c\{h_1(b^2)\a^2+h_2(b^2)\b^2\}b^i.
$$
Thus $F$ is a Douglas metric.

Assume that $F$ is a Douglas metric. Then $G^iy^j - G^j y^i$ are polynomials of degree three in $y$. By (\ref{gp}), we have
\[ G^iy^j - G^j y^i ={}^\a G^i y^j - {}^\a G^j y^i +  c\a^2\left\{\frac{\ptt-2(\po-s\pot)}{2\big(\p-s\pt+(b^2-s^2)\ptt\big)}\right\} (b^iy^j-b^j y^i).\]
Clearly,
\[ \frac{\ptt-2(\po-s\pot)}{2\big(\p-s\pt+(b^2-s^2)\ptt\big)} = h_1+h_2s^2  \]
must be a quadratic function of $s$ and without the first order term. It is easy to see that
 $h_i=h_i(b^2)$ depend only on $b^2$.
\end{proof}

\section{Finsler Metrics of Projective Type}
The  function $\phi =\frac{(\sqrt{1-b^2+s^2}+s)^2}{(1-b^2)^2\sqrt{1-b^2+s^2}} $  in (\ref{squaremetric2*}) comes from the square function $\bar{\phi}= (1+\bar{s})^2$. It satisfies (\ref{pde1}) and  (\ref{pde2}) with $ \mu=0$ and $K=0$. One actually can get many  functions $\phi(s)$ satisfying (\ref{pde1}) and (\ref{pde2}) from a class of positive smooth functions $\bar{\phi}(\bar{s})$  determined by the following ODE (\cite{yu}):
\begin{equation}
 \{ 1+ (k_1+k_3)\bar{s}^2+k_2\bar{s}^4\} \bar{\phi}''(\bar{s}) =  (k_1+k_2 \bar{s}^2 ) \{\bar{\phi}(\bar{s})-\bar{s} \bar{\phi}'(\bar{s})\},\label{ft}
\end{equation}
where $k_1, k_2$ and $k_3$ are constants.
This function $\bar{\phi}$ comes from the classification of projectively flat $(\alpha,\beta)$-metric defined by $\bar{\phi}$ (\cite{szm-projab}).

\bigskip

With a positive smooth function  $\bar{\phi}$ satisfying (\ref{ft}), we define
\begin{equation}
\phi (b^2, s):= \eta(b^2) \rho   \bar{\phi}\Big (\frac{\nu}{\rho}\Big ), \label{pppp}
\end{equation}
where
\begin{eqnarray*}
 \rho : & =  &  \sqrt{ 1- \frac{(k_1+k_3+k_2 b^2) s^2}{1+(k_1+k_3)b^2+k_2 b^4} }\\
\nu : & = & \frac{s}{\sqrt{1+(k_1+k_3)b^2+k_2 b^4}}.
\end{eqnarray*}
and $\eta =\eta (u)$ satisfies
\[ \eta'(u) + \frac{ k_3+k_2 u}{ 2 \{ 1+(k_1+k_3)u+k_2 u^2  \} }     \eta(u) =0.\]
Using Maple program, we can see that $\phi$ satisfies (\ref{pde1}). Among this class of $\phi =\phi(b^2, s)$, one can find a sub-class of functions satisfying (\ref{pde2}) for suitable constant $\kappa$, $\mu$ and $K$.

\bigskip

\begin{example}
Let $\bar{\phi}:= 1+\bar{s}$. It satisfies (\ref{ft}) with $k_1=0, k_2=0$. Letting $k_3:=-1$, we get
\begin{eqnarray*}
\rho  & = & \sqrt{1+ \frac{s^2}{1-b^2} }\\
\nu& = & \frac{s}{\sqrt{1-b^2}}\\
\eta & = & \frac{k}{\sqrt{1-u}},
\end{eqnarray*}
where $k>0$ is a constant. Let $k=1$. The resulting function $\phi =\phi(b^2, s)$ is given by
\begin{equation}
 \phi =\frac{\sqrt{ (1-b^2) +s^2}}{1-b^2} + \frac{s}{1-b^2}.\label{Randersphi}
\end{equation}
By a direct computation, the function in (\ref{Randersphi}) satisfies (\ref{pde2}) for $\mu=0$ and $\kappa = -4K$. Thus if $\alpha$ is a Ricci-flat metric ($\Ric=0$) and $\beta$ is conformal with constant conformal factor $c$ ($b_{i|j} = c a_{ij}$), then $F= \alpha \phi(b^2, \beta/\alpha)$ is an Einstein metric with Ricci constant $K \leq 0$.
\end{example}

\bigskip

\begin{example}
Let  $\bar{\phi} = (1+\bar{s})^2$. It satisfies (\ref{ft}) with $ k_1=2, k_2=0, k_3=-3$.
We get
\begin{eqnarray*}
\rho & = &\frac{\sqrt{ 1-b^2+s^2} }{\sqrt{1-b^2}}\\
\nu & = &\frac{s}{\sqrt{1-b^2}}\\
\eta & = & \frac{k}{(1-b^2)^{3/2}},
\end{eqnarray*}
where $k >0$ is a constant. Let $k=1$. The resulting function $\phi =\phi(b^2, s)$ is given by
\begin{equation}
 \phi = \frac{ (\sqrt{1-b^2+s^2} +s )^2}{(1-b^2)^2 \sqrt{1-b^2+s^2} }.\label{squarephi}
\end{equation}
By a direct computation, one can verify that $\phi $  in (\ref{squarephi}) satisfies (\ref{pde1}). It also satisfies (\ref{pde2}) for $\mu=0$ and $K=0$.
Thus if $\alpha$ is a Ricci-flat Riemannian metric ($\Ric=0$) and $\beta$ is conformal with constant conformal factor $c$ ($b_{i|j}=c a_{ij})$, then
$F =\alpha \phi(b^2, \beta/\alpha)$ is Ricci-flat.
\end{example}

\section{Proof of Theorem \ref{main1}}

First by (\ref{aE})-(\ref{pde1}), the spray coefficients $G^i$ of $F$ is given by
\begin{eqnarray}\label{GP}
G^i={}^\a G^i+c\a\psi y^i,
\end{eqnarray}
where $c^2=\kappa-\mu b^2$ and $\psi := (\phi_2+2 s \phi_1)/(2\phi)$. By (\ref{cb}), we have
\[ c_0 =- \mu \beta.\]
 Let $P=c \alpha \psi$.
Using (\ref{riccichange}) we get
\begin{eqnarray}
\mathrm{Ric}&=&(n-1)(\mu\a^2+P^2-P_{|m}y^m)\nonumber\\
&=&(n-1)\a^2\left\{\mu+c^2[\psi^2-(\psi_2+2s\psi_1)]-\f{c_0}{\a}\psi\right\}\\
& = & (n-1)\alpha^2 \Big \{ \mu + (\kappa-\mu b^2)  [\psi^2-(\psi_2+2s\psi_1)] +\mu s\psi \Big \} .\label{riccc}
\end{eqnarray}
Thus $F$ is an Einstein metric with Ricci constant $K$ if and only if
\begin{equation}
(\kappa-\mu b^2)\left\{\psi^2-(\psi_2+2s\psi_1)\right\}+\mu s\psi+\mu=K\p^2.
\end{equation}

\section{A Special Ricci-flat General $(\alpha,\beta)$-Metric}\label{section4}

In this section, we shall consider the case when $\alpha$ and $\beta$ satisfy (\ref{aE}) with $\kappa=0$. Since we always assume that $c^2= 0-\mu b^2 >0$,  (\ref{pde1}) and (\ref{pde2}) are simplified to
\begin{eqnarray}
&&\phi_{22} = 2 (\phi_1-s \phi_{12}), \label{pde1k=0}\\
&&-\mu b^2 [\psi^2-(\psi_2+2s\psi_1] +\mu s \psi +\mu = K \phi^2,\label{pde2k=0}
\end{eqnarray}
where $ \psi:=(\phi_2+2s\phi_1)/(2\phi)$.
We are unable to find explicit solutions of (\ref{pde1k=0}) and (\ref{pde2k=0}). Nevertheless, we find  a family of functions $\phi =\phi(b^2, s)$ which do not satisfy (\ref{pde1k=0}) such that the general $(\alpha,\beta)$-metric  $F =\alpha \phi(b^2, \beta/\alpha)$is a Ricci-flat metric of Berwald type.

\begin{lem}\label{lem4.1}
Suppose that $\alpha$ and $\beta$ satisfy
\begin{equation}
\Ric = (n-1)\mu \alpha^2, \ \ \ \ \ b_{i|j} = c a_{ij},\label{k=0}
\end{equation}
where $c=c(x)$ is a scalar function with $c^2= 0-\mu b^2$.
Define $\ba$ and $\bb$ by
\begin{eqnarray}\label{bab}
\ba:=\f{\a}{b},
\quad\bb:=\f{\b}{b^2},
\end{eqnarray}
then
\begin{equation}
\bRic=0,\qquad \bbij=0.\label{bR}
\end{equation}
In this case, $\bar b=1$.
\end{lem}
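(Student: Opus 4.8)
The goal is to show that after the rescaling $\ba=\a/b$, $\bb=\b/b^2$ (with $b=\|\beta_x\|_\a$ non-constant, $b_{i|j}=ca_{ij}$, $\Ric=(n-1)\mu\a^2$, $c^2=-\mu b^2$), the new data satisfy $\bRic=0$ and $\bbij=0$, and $\bar b=1$. The easiest part is $\bar b=1$: since $\bb=\b/b^2$, its squared $\ba$-norm is $\|\b\|_\ba^2 = b^2\|\b/b^2\|_\a^2 = b^2\cdot b^2/b^4 = 1$ (the factor $b^2$ coming from $\ba=\a/b$ inverting to $\ba^{ij}=b^2a^{ij}$). So I would dispose of that immediately.

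The substantive content is the two differential identities. I would proceed by the same mechanism already used in the commented-out (\texttt{iffalse}) version: compute the spray coefficients $\bG$ of $\ba$ in terms of $\G$. Because $\ba=e^{\lambda}\a$ with $e^{\lambda}=1/b$ a function of $x$ only (via $b^2=a_{ij}b^ib^j$, and $b^2_{|k}=2b^mb_{m|k}=2cb_k$ so $\lambda_{|k}=-cb_k/b^2$), there is a standard conformal-change formula $\bG = \G + (\text{terms in }\lambda_{|k},\a,\b,b^i)$. Likewise $\bb=\b/b^2$ is a conformal-type rescaling of a \emph{closed} $1$-form (note $\b=df$ locally, Proposition \ref{prop2.2}). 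So I would first write $\bG = \G + \bar Q^i$ with $\bar Q^i$ an explicit expression of the form $A\,\a\,? \, y^i + B\,b^i$ built from $c$, $b$, $\a$, $\b$; concretely I expect something proportional to $-\frac{c}{b^2}\beta\, y^i + \frac{c}{2b^2}\big(\a^2 + (\text{const})\tfrac{\b^2}{b^2}\big)b^i$, to be pinned down by a short direct calculation using $b^2_{|k}=2cb_k$ and the known conformal transformation of sprays.

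Then I would feed $\bar Q^i$ into the Ricci transformation formula (\ref{riccichange}),
\[
\bRic=\Ric+2\bar Q^i{}_{|i}-y^j\bar Q^i{}_{|j.i}+2\bar Q^j\bar Q^i{}_{.j.i}-\bar Q^i{}_{.j}\bar Q^j{}_{.i},
\]
computing each of the four correction terms. Here the horizontal derivatives $\bar Q^i{}_{|j}$ are taken with respect to $\a$, so one uses $b_{i|j}=ca_{ij}$, $c_{|j}=-\mu b_j$ (from (\ref{cb})), and $b^2_{|j}=2cb_j$; the vertical derivatives are elementary since $\a,\b$ are explicit in $y$. The four terms should, after using $c^2=-\mu b^2$ (equivalently $\mu=-c^2/b^2$), combine so that their sum equals exactly $-(n-1)\mu\a^2$, cancelling the $\Ric=(n-1)\mu\a^2$ term and leaving $\bRic=0$. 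This bookkeeping — getting the $n$-dependent coefficients of $\a^2$ and $\b^2/b^2$ in each of the four terms to cancel — is the main obstacle; it is precisely the computation displayed in the struck-out draft (the four boxed expressions for $\bar Q^i{}_{|i}$, $y^j\bar Q^i{}_{|j.i}$, etc.), so I expect it to go through but it is the place where an error is most likely. Finally, for $\bbij=0$: $\bb=\b/b^2$ is closed (being a multiple of $df$, hence $\bs_{ij}=0$), so it suffices to check the symmetric part $\bar r_{ij}=\frac12(\bb_{i|j}+\bb_{j|i})$ vanishes; expanding $\bb_{i|j}=(b_{i|j})/b^2 + b_i\,(1/b^2)_{|j} = ca_{ij}/b^2 - 2cb_ib_j/b^4$ — wait, this must be recomputed with the covariant derivative of $\ba$, not $\a$; doing it with the $\ba$-connection (whose difference from the $\a$-connection is again governed by $\lambda_{|k}=-cb_k/b^2$) should produce exact cancellation giving $\bbij=0$. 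I would present this last check as a direct computation, noting that the Levi-Civita connection change from $\a$ to $\ba=e^\lambda\a$ contributes Christoffel correction terms $\delta^k_i\lambda_{|j}+\delta^k_j\lambda_{|i}-a_{ij}\lambda^{|k}$ that exactly offset the $(1/b^2)_{|j}$ terms.
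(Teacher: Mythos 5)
Your proposal is correct and follows essentially the same route as the paper: compute $\bG=\G+\bar Q^i$ for the deformed metric, feed $\bar Q^i$ into the Ricci transformation formula (\ref{riccichange}) and check that the four correction terms sum to $-(n-1)\mu\a^2$, then verify $\bbij=0$ by a direct computation with the conformally changed connection. The only detail left open in your plan resolves simply: since $\ba=\a/b$ is a pure conformal change with $\lambda_{|k}=-cb_k/b^2$, the standard formula gives $\bar Q^i=-\f{c}{b^2}\b y^i+\f{c}{2b^2}\a^2 b^i$ (your undetermined constant in front of $\b^2/b^2$ is zero), exactly as in the paper.
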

\begin{proof}
It is easy to see that
$$\bG=\G-\f{c}{b^2}\b y^i+\f{c}{2b^2}\a^2b^i,$$
where $c=c(x)$ is a scalar function with $c^2 = 0-\mu b^2$.
Let $$\bar Q^i=-\f{c}{b^2}\b y^i+\f{c}{2b^2}\a^2b^i.$$
Then
\begin{eqnarray*}
\bar Q^i{}_{|i}&=&-\f{\mu}{2}\left((n-3)\a^2+\f{2}{b^2}\b^2\right),\\
y^j\bar Q^i{}_{|j.i}&=&\mu n\left(\a^2-\f{1}{b^2}\b^2\right),\\
\bar Q^i{}_{.j}\bar Q^j{}_{.i}&=&\mu\left(2\a^2-(n+2)\f{1}{b^2}\b^2\right),\\
\bar Q^j\bar Q^i{}_{.j.i}&=&\f{\mu}{2}n\left(\a^2-\f{2}{b^2}\b^2\right).
\end{eqnarray*}
So by (\ref{riccichange}) we have
$$\bRic=\Ric-(n-1)\mu\a^2=0.$$
On the other hand, direct computations show that
\begin{eqnarray*}
\bbij=0.
\end{eqnarray*}
\end{proof}

\begin{prop}
Let $\bar{\phi}(\bar{s})$ be an arbitrary positive smooth function and define
\begin{equation}\label{phin}
\phi(b^2,s):=\f{1}{b}\bar\phi\left(\frac{s}{b}\right).
\end{equation}
Assume that $\alpha$ and $\beta$ satisfy (\ref{k=0}). Then
 the following general $(\alpha,\beta)$-metric
\[ F=\alpha \phi\Big (b^2, \frac{\beta}{\alpha}\Big ) \]
is a Ricci-flat metric of Berwald type.
\end{prop}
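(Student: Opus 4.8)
The plan is to notice that the substitution (\ref{bab}) converts the general $\ab$-metric $F$ into an \emph{ordinary} $\ab$-metric whose defining $1$-form is parallel, and then to combine Lemma \ref{lem4.1} with the elementary fact that such $\ab$-metrics are Berwald and share their spray with the underlying Riemannian metric. So the whole proof is essentially a change of variables plus an appeal to the preceding lemma.

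First I would rewrite $F$ in terms of $\ba=\a/b$ and $\bb=\b/b^2$. Putting $s=\b/\a$, one has $\bb/\ba=\b/(b\a)=s/b$, so
\[
F=\a\,\phi(b^2,s)=\a\cdot\f{1}{b}\,\bar\phi\left(\f{s}{b}\right)=\ba\,\bar\phi\left(\f{\bb}{\ba}\right).
\]
Hence $F=\ba\,\bar\phi(\bar s)$, with $\bar s:=\bb/\ba$, is a genuine $\ab$-metric attached to the pair $(\ba,\bb)$. By Lemma \ref{lem4.1} one has $\bar b:=\|\bb\|_{\ba}=1$, so $\bar s$ ranges over $|\bar s|\le 1$, and the regularity of $F$ reduces to the standard one-dimensional convexity condition on $\bar\phi$ (the $\bar b=1$ specialization of (\ref{ppp})); equivalently, $\phi(b^2,s)=\tfrac{1}{b}\bar\phi(s/b)$ satisfies (\ref{ppp}).

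Next I would invoke Lemma \ref{lem4.1}, whose hypotheses are precisely (\ref{k=0}); it gives $\bRic=0$ and $\bbij=0$, i.e.\ $\bb$ is parallel with respect to $\ba$. Then in the spray formula (\ref{Gi}) written for the $\ab$-metric $F=\ba\,\bar\phi(\bar s)$ all the curvature-type quantities vanish ($\bar r_{ij}=0$, $\bar s_{ij}=0$, whence $\bar s^i{}_0=0$, $\bar r_{00}=0$, $\bar r_0=\bar s_0=0$, $\bar r^i=\bar s^i=0$, $\bar r=0$), so $G^i={}^{\ba}G^i$. Writing $G^i={}^{\ba}G^i+\bar Q^i$ we get $\bar Q^i=0$; since ${}^{\ba}G^i$ are quadratic in $y$, $F$ is a Berwald metric. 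Finally, substituting $\bar Q^i=0$ into the Ricci-change formula (\ref{riccichange}) with base metric $\ba$ yields $\mathrm{Ric}(F)=\bRic=0$, so $F$ is Ricci-flat.

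There is no genuine analytic obstacle here: the only things to verify are the identity $\bar b=1$ and the exactness of the reduction from the general $\ab$-form to the ordinary $\ab$-form, both of which are already supplied by Lemma \ref{lem4.1}; every substantive computation (the $\bar Q^i$-identities establishing $\bRic=0$ and $\bbij=0$) has been done in the proof of that lemma. Thus the step I expect to be the ``main'' one is simply recognizing the correct normalization $\ba=\a/b$, $\bb=\b/b^2$, after which the statement is immediate.
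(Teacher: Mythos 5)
Your proposal is correct and follows essentially the same route as the paper: express $F$ as the ordinary $(\alpha,\beta)$-metric $\bar{\alpha}\,\bar{\phi}(\bar{\beta}/\bar{\alpha})$ via the deformation (\ref{bab}), then apply Lemma \ref{lem4.1} to get $\bRic=0$ and $\bbij=0$, so that $F$ is a Ricci-flat Berwald metric. The only difference is that you spell out the intermediate steps (the identity $\bar{\beta}/\bar{\alpha}=s/b$ and the vanishing of all the $r$- and $s$-quantities in the spray formula) which the paper leaves implicit.
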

\begin{proof}The general $(\alpha,\beta)$-metric $F$ can be expressed as an $(\alpha,\beta)$-metric $F= \bar{\alpha} \bar{\phi} \Big ( \frac{\bar{\beta}}{\bar{\alpha}} \Big )$, where $\bar{\alpha}$ and $\bar{\beta}$ are defined by (\ref{bab}).
By Lemma \ref{lem4.1},
  $\bar{\alpha}$ and $\bar{\beta}$ satisfy (\ref{bR}). Thus
 $ F = \bar{\alpha} \bar{\phi} \Big ( \frac{\bar{\beta}}{\bar{\alpha}} \Big )$ is a Ricci-flat Finsler metric of Berwald type.
\end{proof}

It is easy to check that $\phi=\phi(b^2, s)$ in (\ref{phin})
doesn't satisfy (\ref{pde1k=0}).

\section{Deformations of $(\alpha,\beta)$}

In this section we shall study the case when  $\alpha$ and $\beta$ satisfy (\ref{aE}) with $\kappa\not=0$, namely,
\begin{equation}
\Ric = (n-1) \mu \a, \ \ \ \ \ b_{ij}=c a_{ij}, \label{pde1*****}
\end{equation}
where  $c=c(x)$ is a scalar function with $c^2=\kappa-\mu b^2$ for some constant
$\kappa\not=0$.
We  shall  find a suitable deformation for $(\alpha, \beta)$
such that the new pair
$( \bar{\alpha}, \bar{\beta})$ satisfies
\begin{equation}
\bRic=0, \ \ \ \ \ \bbij = \bar{c} \bar{a}_{ij},  \label{pde1b*****}
\end{equation}
where $\bar{c}\not=0$ is a constant.
Note that if $\mu=0$, we are done since $\alpha$ and $\beta$  already satisfy (\ref{pde1b*****}). Thus we shall assume that $\mu \not=0$.  Besides we always assume that $c^2= \kappa-\mu b^2 >0$.
  In summary, we shall assume that
\begin{equation}
\kappa \not =0, \ \ \ \ \
\mu \not=0, \ \ \ \ \ \kappa - \mu b^2 >0. \label{assumptionk}
\end{equation}

\begin{lem}\label{mupositive1}  Let $\alpha,\beta$ satisfy (\ref{pde1*****}).
Define $\ba$ and $\bb$ by
\begin{eqnarray}
\ba^2=\f{|\mu|}{\kappa-\mu b^2}\left(\a^2+\f{\mu}{\kappa-\mu b^2}\b^2\right),
\quad\bb=\f{|\mu|^{3/2}}{(\kappa-\mu b^2)^\frac{3}{2}}\b, \label{deformationab}
\end{eqnarray}
then $\bar{\alpha}$ and $\bar{\beta}$ satisfy (\ref{pde1b*****}) with $\bar{c}^2=|\mu|$.
In this case,
\begin{equation}
 (\kappa-\mu b^2) \ (\kappa^{-1} + \mu^{-1}\bar{b}^2) =1,\label{kappa-mu}
\end{equation}
and the reversed deformations are given by
\begin{eqnarray}
\a^2=\f{|\mu|^{-1}}{\kappa^{-1}+\mu^{-1}\bar{b}^2}\left(\ba^2-\frac{\mu^{-1}}{\kappa^{-1}+\mu^{-1}b^2}\bb^2\right),
\quad\b=\f{|\mu|^{-3/2}}{(\kappa^{-1}+\mu^{-1}\bar b^2)^\frac{3}{2}}\bb.
\end{eqnarray}
\end{lem}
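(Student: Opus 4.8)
The plan is to prove the Lemma by a direct computation modeled on Lemma \ref{lem4.1}, using the spray formula (\ref{Gi}) and the Ricci-change formula (\ref{riccichange}); I will also sketch a shorter route through the warped-product description of Proposition \ref{prop2.2}.

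Set $\rho := \kappa - \mu b^2$, a positive function on $M$ with $c^2 = \rho$, and note that since $\bij = c\aij$ is symmetric, $\b$ is closed, so the specialization (\ref{tu}) is available. With $\lambda := |\mu|/\rho$, $\tau := \mu/\rho$, $\xi := |\mu|^{3/2}/\rho^{3/2}$ (all functions of $b^2$), the deformation (\ref{deformationab}) reads $\baij = \lambda\aij + \lambda\tau\,\bi\bj$, $\bbi = \xi\,\bi$; equivalently $\ba = \a\,\bar\phi(b^2,\b/\a)$ with $\bar\phi = \sqrt{\lambda(1+\tau s^2)}$. First I would settle the algebraic claims: by the rank-one inversion formula $\bar a^{ij} = \lambda^{-1}\big(a^{ij} - \tfrac{\tau}{1+\tau b^2}b^i b^j\big)$ together with $1+\tau b^2 = \kappa/\rho$, one gets $\bar{b}^2 = \mu^2 b^2/(\kappa\rho)$, whence $\kappa^{-1}+\mu^{-1}\bar{b}^2 = 1/\rho$, which is exactly (\ref{kappa-mu}); solving the linear system (\ref{deformationab}) for $(\a^2,\b)$ in terms of $(\ba^2,\bb)$ and using this relation yields the reversed formulas.

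For the curvature statement, substituting $\bar\phi$ into (\ref{gp}) (or differentiating $\baij$ directly) gives $\bG = \G + \bar Q^i$ with $\bar Q^i = c\,\theta_1(b^2)\,\b\,y^i + c\,\theta_2(b^2)\,\a^2\,b^i$ for explicit coefficients $\theta_1,\theta_2$ — when $\kappa=0$ these reduce to $-1/b^2$ and $1/(2b^2)$, recovering Lemma \ref{lem4.1}. Using $\bij = c\aij$, $c_{|j} = -\mu\bj$ (from (\ref{cb})), $(b^2)_{|j} = 2c\bj$ and $c^2 = \rho$, I would compute the four tensors $\bar Q^i{}_{|i}$, $y^j\bar Q^i{}_{|j.i}$, $\bar Q^i{}_{.j}\bar Q^j{}_{.i}$, $\bar Q^j\bar Q^i{}_{.j.i}$ and substitute into (\ref{riccichange}); with $\Ric = (n-1)\mu\a^2$ the whole expression should collapse to $\bRic = \Ric - (n-1)\mu\a^2 = 0$. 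For the $1$-form part, $\bb = \xi(b^2)\b$ is closed (since $d(b^2)$ is proportional to $\b$, so $d\bb = \xi'\,d(b^2)\wedge\b = 0$), and differentiating $\bbi = \xi\bi$ against the Christoffel symbols of $\baij$ — which differ from those of $\aij$ by the second $y$-derivatives of $\bar Q^i$ — produces $\bbij = \bar c\,\baij$; tracing and using the identities above then forces $\bar c$ to be constant with $\bar c^2 = |\mu|$.

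A more conceptual alternative invokes Proposition \ref{prop2.2}: write $\a^2 = dt\otimes dt + h(t)^2\ha^2$, $\b = h(t)\,dt$, with $h''+\mu h = 0$ and $\kappa = h'(t)^2 + \mu h(t)^2$. Substituting into (\ref{deformationab}) and changing variables by $d\bar t = \frac{\sqrt{|\mu|\kappa}}{h'(t)^2}\,dt$, the metric $\ba^2$ becomes the warped product $d\bar t\otimes d\bar t + \bar h(\bar t)^2\ha^2$ with $\bar h = \sqrt{|\mu|}\,h/|h'|$; using $h''=-\mu h$ one checks that $\bar h'(\bar t) = \pm\sqrt{\kappa}$ is constant, so $\bar h''=0$ and $\bar\kappa := \bar h'(\bar t)^2 = \kappa$, and the Ricci identity from the proof of Proposition \ref{prop2.2} gives $\bRic = \hRic - (n-2)\bar\kappa\,\ha^2 = 0$; likewise $\bb = \bar g(\bar t)\,d\bar t$ with $\bar g/\bar h$ constant, so $\bbij = \bar c\,\baij$ with $\bar c = \bar g'(\bar t)$ constant and $\bar c^2 = |\mu|$. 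The main obstacle in the first (paper-faithful) approach is the bookkeeping: pinning down $\theta_1,\theta_2$ correctly and then pushing the four-term expansion of (\ref{riccichange}) through to the exact cancellation (and, in parallel, extracting $\bar c^2 = |\mu|$ from $\bbij = \bar c\baij$); in the second approach the only delicate point is tracking the sign of $h'$ and the standing positivity assumptions that keep $\ba$ Riemannian through the substitution $t\mapsto\bar t$.
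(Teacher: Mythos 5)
Your proposal is essentially correct, and it contains two routes: the first is the paper's own argument, the second is a genuine alternative. The paper proves the lemma exactly as in your first plan: it writes $\bG=\G+\bar Q^i$, computes the four tensors $\bar Q^i{}_{|i}$, $y^j\bar Q^i{}_{|j.i}$, $\bar Q^i{}_{.j}\bar Q^j{}_{.i}$, $\bar Q^j\bar Q^i{}_{.j.i}$, feeds them into (\ref{riccichange}) to get $\bRic=\Ric-(n-1)\mu\a^2=0$, and verifies $\bbij=\pm\sqrt{|\mu|}\,\baij$ by direct computation; your identities $c_{|j}=-\mu\bj$, $(b^2)_{|j}=2c\bj$, $c^2=\kappa-\mu b^2$ are precisely the inputs needed, and your Sherman--Morrison computation of $\bar b^2=\mu^2b^2/(\kappa\rho)$ correctly yields (\ref{kappa-mu}) and the inverse deformation. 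Two small corrections to that route: for this particular deformation the $b^i$-term is absent, i.e.\ $\bar Q^i=\f{c\mu}{\kappa-\mu b^2}\b\,y^i$ (so $\theta_2=0$ and the change is purely projective), and your sanity check that at $\kappa=0$ the coefficients reduce to those of Lemma \ref{lem4.1} is not right, since Lemma \ref{lem4.1} uses the different deformation $\ba=\a/b$, $\bb=\b/b^2$ (without the $\b^2$ correction inside $\ba^2$); neither point affects the validity of the plan. Your second, warped-product route via Proposition \ref{prop2.2} is not in the paper and is arguably more conceptual: writing $\a^2=dt\otimes dt+h^2\ha^2$, $\b=h\,dt$ with $\kappa-\mu b^2=(h')^2$, the deformation becomes $\ba^2=d\bar t\otimes d\bar t+\bar h^2\ha^2$ with $\bar h=\sqrt{|\mu|}\,h/|h'|$ and $d\bar h/d\bar t=\pm\sqrt{\kappa}$ constant, so $\bar h''=0$ kills the $\mu$-terms in the warped-product Ricci formula and $\hRic=(n-2)\kappa\ha^2$ gives $\bRic=0$, while $\bb=\f{\sqrt{|\mu|}}{\sqrt{\kappa}}\bar h\,d\bar t$ gives $\bbij=\pm\sqrt{|\mu|}\,\baij$ via (\ref{Hess}). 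This buys transparency (one sees \emph{why} the deformation works: it straightens the warping function) at the cost of being local and requiring $\b\neq0$ and care with the sign of $h'$ and with $\kappa<0$ (where, as the paper notes after the lemma, $\ba$ is only pseudo-Riemannian); the paper's tensorial computation avoids those caveats.
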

\begin{proof}
It is easy to see that
$$\bG=\G+\f{c \mu}{\kappa-\mu b^2}\b y^i.$$
Let $\bar Q^i=\f{c\mu}{{\kappa}-\mu b^2}\b y^i$, then
\begin{eqnarray*}
\bar Q^i{}_{|i}&=&\mu\left(\a^2+\f{\mu}{{\kappa}-\mu b^2}\b^2\right),\\
y^j\bar Q^i{}_{|j.i}&=&(n+1)\mu\left(\a^2+\f{\mu}{\kappa-\mu b^2}\b^2\right),\\
\bar Q^i{}_{.j}\bar Q^j{}_{.i}&=&(n+3)\f{\mu^2}{\kappa-\mu b^2}\beta^2,\\
\bar Q^j\bar Q^i{}_{.j.i}&=&(n+1)\f{\mu^2}{\kappa -\mu b^2}\beta^2.
\end{eqnarray*}
So by (\ref{riccichange}) we have
$$\bRic=\Ric-(n-1)\mu\a^2=0.$$
On the other hand, direct computations show that
\begin{eqnarray*}
\bbij&=&\f{c |\mu|^{3/2}}{\left(\kappa-\mu b^2\right)^\frac{3}{2}}\left(\aij+\f{\mu}{{\kappa}-\mu b^2}
\bi\bj\right)=\pm\sqrt{|\mu|}\baij.
\end{eqnarray*}
\end{proof}

One must be careful in the case when $\kappa<0$ because in this case $\ba^2$ is no longer a Riemannian metric. Actually, $\ba^2$ is a pseudo-Riemannian metric of signature $(n-1,1)$. Because it is positive definite on the hyperplane $\beta=0$ and negative when $y^i=b^i$. In particular, the norm of $\bb$ with respect to $\ba$ is negative, i.e., $\bar b^2<0$.

\section{Deformations of $\phi$}\label{section_solution}
In this section, we shall study (\ref{pde1}) and (\ref{pde2}) with $\kappa\neq0$, that is,
\begin{eqnarray}
&&\ptt=2(\po-s\pot),\label{pde1**}\\
&&
( \kappa -\mu b^2) \left[\psi^2-(\psi_2+2s\psi_1)\right]+\mu s \psi + \mu =K\p^2,\label{pde2**}
\end{eqnarray}
where $\psi:=(\pt+2s\po)/(2\p)$.  We still make the same assumption as (\ref{assumptionk}), $$ \mu\not=0, \ \ \ \ \ \kappa\not=0, \ \  \  \ \ \kappa-\mu b^2 >0.$$
Using appropriate substitutions, we shall simplify (\ref{pde1**}) and (\ref{pde2**}) to the following PDEs:
\begin{eqnarray}
&& \bar{\phi}_{22} = 2 (\bar{\phi}_1 - \bar{s}\bar{\phi}_{12} ),\label{pde1***}\\
&&  \bar{\kappa} [\bar{\psi}^2 -(\bar{\psi}_2 +2 \bar{s}\bar{\psi}_1 )] =K \bar{\phi}^2,\label{pde2***}
\end{eqnarray}
where $\bar{\psi}:= (\bar{\phi}_2 +2 \bar{s}\bar{\phi}_1 )/(2\bar{\phi})$.

\begin{lem}\label{pbp1}
Suppose that a positive smooth  function $\p=\phi(b^2,s)$ is a solution of (\ref{pde1**}) and (\ref{pde2**}), then the following function
\begin{eqnarray}
\bar\phi(\bar b^2,\bar s):=\f{\sqrt{{\kappa^{-1}+\mu^{-1}\bar{b}^2-\mu^{-1} \bar{s}^2}}}{\sqrt{|\mu|}(\kappa^{-1}+\mu^{-1}\bar{b}^2)}\phi\left(\f{\mu^{-1}\bar b^2}{\kappa^{-1}+\mu^{-1}\bar{b}^2} \frac{\kappa}{\mu},\; \f{|\mu|^{-1}\bar s}{\sqrt{\kappa^{-1}+\mu^{-1}\bar{b}^2}\sqrt{\kappa^{-1}+\mu^{-1}\bar{b}^2-\mu^{-1} \bar{s}^2}}\right)\label{barphi}
\end{eqnarray}
satisfies
 (\ref{pde1***}) and (\ref{pde2***}) with $\bar{\kappa}=|\mu|$.
Conversely, if $\bar{\phi}=\bar{\phi}(\bar{b}^2, \bar{s})$ is a solution of (\ref{pde1***}) and (\ref{pde2***}) with $\bar{\kappa}=|\mu|$, then the following function
\begin{eqnarray}
\phi(b^2,s):=\f{\sqrt{|\mu|}\sqrt{\kappa-\mu b^2+\mu s^2}}{\kappa-\mu b^2}\bar\phi\left(\f{\mu b^2}{\kappa-\mu b^2}\frac{\mu}{\kappa},\; \f{|\mu| s}{\sqrt{\kappa-\mu b^2}\sqrt{\kappa-\mu b^2+\mu s^2}}\right)
\end{eqnarray}
satisfies (\ref{pde1**}) and (\ref{pde2**}).
\end{lem}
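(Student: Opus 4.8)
The plan is to avoid manipulating the PDEs directly and instead to exploit the fact that the deformation $(\alpha,\beta)\mapsto(\bar\alpha,\bar\beta)$ of Lemma \ref{mupositive1} and the deformation $\phi\mapsto\bar\phi$ of (\ref{barphi}) are rigged so that the \emph{Finsler metric itself} is unchanged,
\[ \alpha\,\phi\Big(b^2,\frac{\beta}{\alpha}\Big)=\bar\alpha\,\bar\phi\Big(\bar b^2,\frac{\bar\beta}{\bar\alpha}\Big). \]
Granting this identity, the two PDE systems are nothing but the characterizations supplied by Theorem \ref{main1} and Proposition \ref{Douglas} of the two representation-independent facts ``$F$ is projectively equivalent to its underlying Riemannian metric'' and ``$F$ is Einstein with Ricci constant $K$'', written out in the two coordinate systems $(\alpha,\beta)$ and $(\bar\alpha,\bar\beta)$.

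First I would record the elementary consequences of the deformation formulas (\ref{deformationab}) together with (\ref{kappa-mu}): writing $s=\beta/\alpha$, $\beta=s\alpha$, a short computation gives
\[ \frac{\bar\alpha}{\alpha}=\frac{\sqrt{|\mu|}\,\sqrt{\kappa-\mu b^2+\mu s^2}}{\kappa-\mu b^2},\qquad \bar s:=\frac{\bar\beta}{\bar\alpha}=\frac{|\mu|\,s}{\sqrt{\kappa-\mu b^2}\,\sqrt{\kappa-\mu b^2+\mu s^2}},\qquad \bar b^2=\frac{\mu b^2}{\kappa-\mu b^2}\cdot\frac{\mu}{\kappa}. \]
Substituting these into the right-hand side of (\ref{barphi}) one checks that its two arguments of $\phi$ collapse to $b^2$ and $s$ respectively, so that $\bar\phi(\bar b^2,\bar s)=\frac{\kappa-\mu b^2}{\sqrt{|\mu|}\,\sqrt{\kappa-\mu b^2+\mu s^2}}\,\phi(b^2,s)$; multiplying by $\bar\alpha$ and using the relation for $\bar\alpha/\alpha$ above then yields the displayed identity, so that $\bar\phi$ is precisely the defining function of $F$ in the $(\bar\alpha,\bar\beta)$-representation. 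Solved for $\phi(b^2,s)$, that same identity is exactly the explicit formula for $\phi$ in terms of $\bar\phi$ stated in the lemma; hence (\ref{barphi}) and that formula are mutual inverses, which will deliver the converse half of the lemma for free.

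Next I would transfer the hypotheses through this dictionary. By Lemma \ref{mupositive1}, $(\bar\alpha,\bar\beta)$ satisfies (\ref{aE}) with constants $\bar\mu=0$ and $\bar\kappa=|\mu|$, since $\bbij=\bar c\,\baij$ with $\bar c^2=|\mu|$ constant. From the identity $\bG=\G+\f{c\mu}{\kappa-\mu b^2}\beta\,y^i$ obtained in the proof of Lemma \ref{mupositive1}, $\bar\alpha$ is projectively equivalent to $\alpha$; since $\phi$ solves (\ref{pde1**}), Proposition \ref{Douglas}(a) makes $F$ projectively equivalent to $\alpha$, hence to $\bar\alpha$, and a second application of Proposition \ref{Douglas}(a), now to the triple $(\bar\alpha,\bar\beta,\bar\phi)$ (legitimate because $\bar c=\pm\sqrt{|\mu|}\neq0$), shows that $\bar\phi$ satisfies (\ref{pde1***}). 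Thus both $(\alpha,\beta,\phi)$ and $(\bar\alpha,\bar\beta,\bar\phi)$ meet the hypotheses of Theorem \ref{main1}. Since $\phi$ solves (\ref{pde2**}), Theorem \ref{main1} applied to $(\alpha,\beta,\phi)$ says $F$ is Einstein with Ricci constant $K$; Theorem \ref{main1} applied to $(\bar\alpha,\bar\beta,\bar\phi)$ --- whose defining PDE (\ref{pde2}), upon setting $\bar\mu=0$ and $\bar\kappa=|\mu|$, reduces to precisely (\ref{pde2***}) --- then forces $\bar\phi$ to satisfy (\ref{pde2***}). The converse follows by reading this chain backwards and using the mutual-inverse property noted above.

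The main obstacle is the bookkeeping of the first step: establishing $\alpha\phi(b^2,\beta/\alpha)=\bar\alpha\bar\phi(\bar b^2,\bar\beta/\bar\alpha)$ comes down to verifying that the two rather forbidding arguments of $\phi$ in (\ref{barphi}) really do simplify to $b^2$ and $s$ once the expressions for $\bar\alpha/\alpha$, $\bar s$ and $\bar b^2$ are inserted --- a mechanical but error-prone calculation, most safely done (or confirmed) with computer algebra. A secondary point requiring care: when $\kappa<0$ the deformed $\bar\alpha$ is only a pseudo-Riemannian metric of signature $(n-1,1)$ and $\bar b^2<0$, as already observed after Lemma \ref{mupositive1}, so ``$F$ Einstein'' and the quoted results must be understood in that extended --- but still purely algebraic --- sense; the PDE identities themselves are formal and do not feel the sign of $\kappa$.
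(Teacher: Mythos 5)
Your proposal is correct and follows essentially the same route as the paper: both exploit the invariance $\alpha\phi(b^2,\beta/\alpha)=\bar\alpha\bar\phi(\bar b^2,\bar\beta/\bar\alpha)$ under the deformation of Lemma \ref{mupositive1} and then read off both PDE systems from Theorem \ref{main1} (and the projective-equivalence criterion) applied in the two representations, with the converse obtained by inverting the deformation. Your use of Proposition \ref{Douglas}(a) twice to transfer (\ref{pde1**}) into (\ref{pde1***}) merely fills in the detail the paper delegates to a citation of \cite{yct-zhm-onan}.
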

\begin{proof} Let  $\p=\phi(b^2,s)$ be a positive smooth function satisfying (\ref{pde1**}) and (\ref{pde2**}).
Take  $\alpha$ and $\beta$ satisfying  (\ref{pde1*****}) and define $\bar{\alpha}$ and $\bar{\beta}$ by (\ref{deformationab}). Then $\bar{\alpha}$ and $\bar{\beta}$ satisfy (\ref{pde1b*****}) by Lemma \ref{mupositive1}. For the  function $\bar{\phi} =\bar{\phi}(\bar{b}^2, \bar{s})$  in (\ref{barphi}), we have
\[
F=\alpha \phi ( b^2,s )=\bar{\alpha}\bar{\phi} (\bar{b}^2, \bar{s}),
\]
where $ s = \beta/\alpha$ and $\bar{s}=\bar{\beta}/\bar{\alpha}$.
By the same argument in \cite{yct-zhm-onan}, one can see that $\bar\p(\bar b^2,\bar s)$ satisfies (\ref{pde1***}) if and only if $\p(b^2,s)$ satisfies (\ref{pde1**}).
 Notice that $\bar{\alpha}$ and $\bar{\beta}$ satisfy (\ref{pde1b*****}) with $\bar{c}^2=|\mu |$, so $\bar\p=\bar\p(\bar b^2,\bar s)$ must satisfy (\ref{pde2***}) with $\bar{\kappa}= |\mu|$  by Theorem \ref{main1}.
 One can also verify this lemma by Maple program directly.

The converse is proved in a similar way.
\end{proof}

\section{Solutions}

In order to find Einstein general $(\alpha,\beta)$-metrics, we first take $\bar{\alpha}$ and $\bar{\beta}$ satisfying (\ref{pde1b*****}) with $\bar{c}^2 =|\mu|$, then we find function $\bar{\phi}=\bar{\phi}(\bar{b}^2, \bar{s})$ satisfying (\ref{pde1***}) and (\ref{pde2***}). Then by Theorem \ref{main1} the general $(\alpha,\beta)$-metric
$ F = \bar{\alpha}\bar{\phi}(\bar{b}^2, \bar{\beta}/\bar{\alpha})$  is Einstein with Ricci constant $K$. Using the above deformations, we  express $F$ in another form
$F=\alpha \phi (b^2, \beta/\alpha)$ where $\alpha$ and $\beta$ satisfy (\ref{pde1*****}) and
$\phi$ satisfies (\ref{pde1**}) and (\ref{pde2**}). By Theorem \ref{main1} again, we can see that $F $ is Einstein metric with Ricci constant $K$.

In this section, we shall solve (\ref{pde1***}) and (\ref{pde2***}). For simplicity, we shall remove the bars. We consider the following PDEs:
\begin{eqnarray}
&&\phi_{22}=2(\phi_1-s\phi_{12}) \label{pde1new}\\
&& \kappa [\psi^2 -(\psi_2+2s\psi_1)]=K\phi^2\label{pde2new}
\end{eqnarray}
where $\psi = (\phi_2+2s\phi_1)/(2\phi)$.

Making
\begin{eqnarray}\label{uv}
u=b^2-s^2,\qquad v=s.
\end{eqnarray}
(\ref{pde2new}) can be reexpressed as a simpler form
\begin{eqnarray}\label{pde5}
\kappa\left(\f{1}{\sqrt{\p}}\right)_{vv}-K\left(\f{1}{\sqrt{\p}}\right)^{-3}=0.
\end{eqnarray}

\begin{lem}
The non-constant solutions of Equation (\ref{pde5}) are given by
\begin{eqnarray}\label{sol01}
\p(u,v)=\frac{1}{p(u)\pm2\sqrt{-\sigma}v}
\end{eqnarray}
or
\begin{eqnarray}\label{sol02}
\p(u,v)=\frac{q(u)}{(p(u)+q(u)v)^2+\sigma},
\end{eqnarray}
where $\sigma=\frac{K}{\kappa}$.
\end{lem}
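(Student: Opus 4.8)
The key observation I would exploit is that in (\ref{pde5}) the variable $u$ enters only as a passive parameter: for each fixed $u$ it is an ordinary differential equation in $v$. The plan is to introduce $w := 1/\sqrt{\phi}$ and $\sigma := K/\kappa$, so that (\ref{pde5}) becomes the autonomous second-order ODE $w_{vv} = \sigma\, w^{-3}$ in $v$, integrate it once to produce a first integral depending on an arbitrary function $E=E(u)$, and then split into cases according to whether $E$ vanishes identically or not.

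First I would multiply the equation by $2w_v$ and note that both sides are exact $v$-derivatives, $(w_v^2)_v = (-\sigma w^{-2})_v$, and integrate in $v$ to obtain the first integral
\[
w_v^2 = E(u) - \sigma\, w^{-2},
\]
where $E(u)$ is an arbitrary function of the parameter $u$. A non-constant solution has $w_v\not\equiv 0$, and the natural dichotomy is $E\equiv 0$ versus $E\not\equiv 0$.

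When $E\equiv 0$, the relation $w_v^2 = -\sigma w^{-2}$ forces $\sigma\le 0$ and reduces to $\tfrac12(w^2)_v = \pm\sqrt{-\sigma}$, which integrates to $w^2 = p(u)\pm 2\sqrt{-\sigma}\,v$ for an arbitrary function $p(u)$; since $\phi=w^{-2}$ this is exactly the family (\ref{sol01}). When $E\not\equiv 0$, I would multiply through by $w^2$ to get $(ww_v)^2 = Ew^2-\sigma$ and substitute $z := Ew^2-\sigma$, so that $z_v = 2Eww_v$ and hence $(\sqrt z)_v = \pm E$; integrating gives $\sqrt z = E\,(v+g(u))$ up to a sign absorbed into $g(u)$, i.e. $w^2 = E(u)(v+g(u))^2 + \sigma/E(u)$. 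Writing $q(u):=E(u)$ and $p(u):=E(u)g(u)$ and using $\phi=w^{-2}$ yields $\phi = q(u)/\big((p(u)+q(u)v)^2+\sigma\big)$, which is the family (\ref{sol02}). Since each step above is reversible (separation of variables), both displayed families genuinely solve (\ref{pde5}), and this can be confirmed by direct substitution if a check is wanted.

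The step I expect to require the most care is the bookkeeping around the integration ``constants'' $E(u)$, $g(u)$, $p(u)$, $q(u)$, which are really functions of the parameter $u$: one must argue they are well defined and smooth on the relevant domain, that the case split $E\equiv 0$ versus $E\not\equiv 0$ is genuinely exhaustive (ruling out mixed behaviour where $E$ vanishes only on a proper subinterval), and that dividing by $w$ or by $\sqrt z$ is legitimate wherever $\phi>0$ and smooth. Everything else is standard first-order ODE integration, together with the routine translation back from $w$ and $(u,v)$ to $\phi$ and $(b^2,s)$.
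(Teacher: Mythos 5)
Your proposal is correct and follows essentially the same route as the paper: both substitute $w=1/\sqrt{\phi}$, derive the first integral $w_v^2=q(u)-\sigma w^{-2}$ (the paper gets it by treating $y=w_v$ as a function of $w$, you by multiplying by $2w_v$ and integrating — the same energy trick), and then split on whether the integration ``constant'' in $u$ vanishes, with $q\equiv0$ giving (\ref{sol01}) and $q\not\equiv0$ giving (\ref{sol02}). Your explicit final quadratures are if anything more detailed than the paper's, which leaves that last step implicit.
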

\begin{proof}(\ref{pde5}) can be written as
\begin{eqnarray}\label{eqn01}
\left(\frac{1}{\sqrt{\p}}\right)_{vv}=\sigma\left(\frac{1}{\sqrt{\p}}\right)^{-3}
\end{eqnarray}
Denote $w=\frac{1}{\sqrt{\phi}}$. Set $y=\frac{\ud w}{\ud v}$ and regard it as a function of $w$, then
$$\frac{\ud^2w}{\ud v^2}=\frac{\ud y}{\ud w}\cdot\frac{\ud w}{\ud v}=yy'.$$
Now equation (\ref{eqn01}) becomes
\begin{eqnarray}
yy'=\sigma w^{-3}.
\end{eqnarray}
The solutions of the above equation are given by
$$y^2=q(u)-\sigma w^{-2}$$
for some function $q(u)$, hence
$$\f{\ud w}{\ud v}=\pm\sqrt{q(u)-\sigma w^{-2}},$$
which means
$$\f{1}{2}\f{\ud w^2}{\ud v}=\pm\sqrt{q(u)w^2-\sigma}.$$
So $\p$ are given by (\ref{sol01}) when $q(u)=0$ or (\ref{sol02}) when $q(u)\neq0$.
\end{proof}

\begin{lem}
The non-constant solutions of Equation (\ref{pde1new}) and (\ref{pde2new}) are given by
\begin{eqnarray}\label{sol03}
\p(b^2,s)=\f{1}{2\sqrt{-\sigma}}\cdot\f{1}{\pm\sqrt{C-b^2+s^2}\pm s}
\end{eqnarray}
or
\begin{eqnarray}\label{sol05}
\p(b^2,s)=\f{q(u)}{q^2(u)(Dq(u)+v)^2+\sigma},
\end{eqnarray}
where $q\neq0$ is determined by the following equation
\begin{eqnarray}
D^2q^4+(u-C)q^2-\sigma=0.
\end{eqnarray}
$C$ and $D$ given above are both constant number.
\end{lem}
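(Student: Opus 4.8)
The plan is as follows. Any non-constant solution $\phi=\phi(b^2,s)$ of the pair (\ref{pde1new})--(\ref{pde2new}) is in particular a non-constant solution of (\ref{pde2new}), hence of its reduced form (\ref{pde5}); so by the previous lemma it must be of the form (\ref{sol01}) or (\ref{sol02}) for some smooth function $p=p(u)$ and, in the second case, a smooth function $q=q(u)\neq0$, where $u=b^2-s^2$. It therefore only remains to decide which of these functions also satisfy (\ref{pde1new}). The first step is to rewrite (\ref{pde1new}) in the coordinates (\ref{uv}): from $\phi_1=\phi_u$, $\phi_2=\phi_v-2v\phi_u$, $\phi_{12}=\phi_{uv}-2v\phi_{uu}$ and $\phi_{22}=\phi_{vv}-4v\phi_{uv}-2\phi_u+4v^2\phi_{uu}$ (obtained from (\ref{uv}) by the chain rule), equation (\ref{pde1new}) becomes
\[ \phi_{vv}-2v\,\phi_{uv}-4\phi_u=0. \]
One then substitutes the two families into this equation, clears denominators, and reads off the constraints on $p$ (and $q$).

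For the family (\ref{sol01}), writing $\phi=1/(p\pm2\sqrt{-\sigma}\,v)$ and substituting into the displayed equation, clearing the factor $(p\pm2\sqrt{-\sigma}\,v)^{3}$ collapses everything to the single ODE $pp'=2\sigma$, i.e. $(p^2)'=4\sigma$. Integrating gives $p^2=4(-\sigma)(C-u)$ for a constant $C$, hence $p=\pm2\sqrt{-\sigma}\sqrt{C-u}$; restoring $u=b^2-s^2$ and $v=s$ yields (\ref{sol03}).

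For the family (\ref{sol02}) --- the main case --- set $A:=p+qv$, so that $\phi=q/(A^{2}+\sigma)$ with $A_v=q$ and $A_u=p'+q'v$. Compute $\phi_v,\phi_u,\phi_{vv},\phi_{uv}$ in terms of $A$, substitute into $\phi_{vv}-2v\phi_{uv}-4\phi_u=0$, and clear the denominator $(A^{2}+\sigma)^{3}$. Eliminating $v$ via $qv=A-p$ and using $qA_u=(qp'-pq')+q'A$, the left-hand side becomes a cubic polynomial in $A$ whose coefficients depend only on $u$; since $q\neq0$, the quantity $A$ runs over an interval for each fixed $u$, so all four coefficients must vanish. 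The $A^{3}$-coefficient is proportional to $q^{3}(p/q^{2})'$, which forces $p=Dq^{2}$ for a constant $D$; feeding this back, the $A^{1}$-coefficient vanishes identically and the $A^{2}$- and $A^{0}$-coefficients both reduce to $q^{3}+2(\sigma+D^{2}q^{4})q'=0$. Separating variables, $\int(2\sigma q^{-3}+2D^{2}q)\,\ud q=-\!\int\ud u$, we obtain $D^{2}q^{2}-\sigma q^{-2}=C-u$ for a constant $C$, that is, $D^{2}q^{4}+(u-C)q^{2}-\sigma=0$. Finally $\phi=q/((Dq^{2}+qv)^{2}+\sigma)=q/(q^{2}(Dq+v)^{2}+\sigma)$, which with $v=s$ and $u=b^2-s^2$ is (\ref{sol05}). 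Conversely, that (\ref{sol03}) and (\ref{sol05}) do solve (\ref{pde1new})--(\ref{pde2new}) is a direct verification (for instance with Maple).

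The main obstacle is the bookkeeping in the family (\ref{sol02}): clearing the denominator $(A^{2}+\sigma)^{3}$ produces a long expression, and the argument hinges on two simplifications that are not visible in advance --- that the vanishing of the $A^{3}$-coefficient integrates to $p=Dq^{2}$, after which the remaining three coefficient equations collapse to a single separable ODE, and that this ODE integrates to exactly the stated quartic relation $D^{2}q^{4}+(u-C)q^{2}-\sigma=0$. One must also keep track of the $\pm$ ambiguities (which get absorbed into the constants $C$ and $D$) and check separately that the degenerate subcases $p\equiv0$ (i.e.\ $D=0$) and $\sigma=0$ (i.e.\ $K=0$) are subsumed in the stated forms; by contrast, the family (\ref{sol01}) and the converse verification are routine.
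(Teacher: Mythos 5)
Your proposal is correct and follows essentially the same route as the paper: pass to the coordinates $u=b^2-s^2$, $v=s$, reduce (\ref{pde1new}) to $\phi_{vv}-2v\phi_{uv}-4\phi_u=0$, and substitute the two families from the preceding lemma to obtain the ODEs $pp'=2\sigma$ and, respectively, $p=Dq^2$ together with $2(\sigma+D^2q^4)q'+q^3=0$. The only cosmetic differences are that you organize the resulting polynomial identity in powers of $A=p+qv$ rather than of $v$, and that you carry out explicitly the integration leading to $D^2q^4+(u-C)q^2-\sigma=0$, which the paper merely states.
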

\begin{proof}
Firstly, under the change of variables as (\ref{uv}), (\ref{pde1new}) becomes
\begin{eqnarray}\label{eqn02}
\p_{vv}-2v\p_{uv}-4\p_u=0
\end{eqnarray}

(1) When $\p$ is given by (\ref{sol01}), then
\begin{eqnarray*}
\p_u=\f{-p'}{(p\pm2\sqrt{-\sigma}v)^2},\qquad\p_v=\pm\f{-2\sqrt{-\sigma}}{(p\pm2\sqrt{-\sigma}v)^2},\\
\p_{uv}=\pm\f{4\sqrt{-\sigma}p'}{(p\pm2\sqrt{-\sigma}v)^3},\qquad\p_{vv}=\f{-8\sigma}{(p\pm2\sqrt{-\sigma}v)^3}.
\end{eqnarray*}
So the equation (\ref{eqn02}) is equivalent to the following equation
\begin{eqnarray*}
2\sigma\pm2\sqrt{-\sigma}p'v-p'(p\pm2\sqrt{-\sigma}v)=0,
\end{eqnarray*}
and hence
\begin{eqnarray}\label{p1}
2\sigma-pp'=0.
\end{eqnarray}
When $\sigma=0$, $p$ thus $\p$ is a constant number in this case. When $\sigma\neq0$, we have
$$p=\pm2\sqrt{-\sigma}\cdot\sqrt{C-u}$$
for some constant number $C$. So $\p$ is given by (\ref{sol03}).

(2) When $\p$ is given by (\ref{sol02}), then
\begin{eqnarray*}
\p_u&=&\frac{q'}{[(p+qv)^2+\sigma]}-\frac{2(p+qv)q(p'+q'v)}{[(p+qv)^2+\sigma]^2},\\
\p_v&=&-\frac{2(p+qv)q^2}{[(p+qv)^2+\sigma]^2},\\
\p_{uv}&=&-\frac{4(p+qv)qq'}{[(p+qv)^2+\sigma]^2}-\frac{2q^2(p'+q'v)}{[(p+qv)^2+\sigma]^2}
+\frac{8(p+qv)^2q^2(p'+q'v)}{[(p+qv)^2+\sigma]^3},\\
\p_{vv}&=&-\frac{2 q^3}{[(p+qv)^2+\sigma]^2}+\frac{8(p+qv)^2q^3}{[(p+qv)^2+\sigma]^3}.
\end{eqnarray*}
So the equation (\ref{eqn02}) is equivalent to the following equation
\begin{eqnarray*}
A(u)v^3+B(u)v^2+C(u)v+D(u)=0,
\end{eqnarray*}
where
\begin{eqnarray*}
A(u)&=&2q^3(2pq'-qp')\\
B(u)&=&3q^2[(2(p^2+)q'+q^3)],\\
C(u)&=&6q[q(p^2+\sigma)p'+pq^3],\\
D(u)&=&(3p^2-\sigma)[2(p^2+\sigma)q'+q^3]-4p(p^2+\sigma)(2pq'-qp').
\end{eqnarray*}
It is easy to see that $A=B=C=D=0$, which means that $\p$ is a solution of (\ref{eqn02}), if and only if
\begin{eqnarray}
2pq'-qp'=0,\label{eqn03}\\
2(p^2+\sigma)q'+q^3=0\label{eqn04}.
\end{eqnarray}
(\ref{eqn03}) implies that
\begin{eqnarray}\label{pqD}
p=Dq^2
\end{eqnarray}
for some constant number $D$ or $q=0$, the later can be omitted since $\p=0$ in this case.

Plugging (\ref{pqD}) into (\ref{eqn04}) yields
$$\left(D^2q^4+\sigma\right)(q^2)'+q^4=0,$$
so $q=0$ or
\begin{eqnarray}\label{eqn05}
D^2q^4+(u-C)q^2-\sigma=0.
\end{eqnarray}
for some constant number $C$.
\end{proof}

\bigskip

The solutions of  (\ref{sol05}) can be reexpressed explictly as follows.\\

\begin{enumerate}
\item[(i)] $D=0$, $\phi$ is given by
\begin{eqnarray}\label{sol04}
\p(b^2,s)=\pm\frac{\sqrt{-\sigma^{-1}(C-b^2+s^2)}}{C-b^2}.
\end{eqnarray}
In this case, the corresponding general $\ab$-metrics are Riemannian metrics.\\
\item[(ii)] When $D\neq0$ and $\sigma=0$, $\phi$ is given  by
\begin{eqnarray}
\p(b^2,s)=\f{D}{\sqrt{C-b^2+s^2}(\sqrt{C-b^2+s^2}\pm s)^2},
\end{eqnarray}
In this case, the corresponding general $\ab$-metrics are Berwald's metrics.\\
\item[(iii)] When $D\neq0$ and $\sigma<0$, $\phi$ is given by
\begin{eqnarray}
\p(b^2,s)=\f{1}{2\sqrt{-\sigma}}\left(\f{1}{\pm\sqrt{C+2\sqrt{-\sigma}D-b^2+s^2}-s}
-\f{1}{\pm\sqrt{C-2\sqrt{-\sigma}D-b^2+s^2}-s}\right)
\end{eqnarray}
In this case, the corresponding general $\ab$-metrics are first given by \cite{szm-proj} as (39) in it.\\
\item[(iv)] When $D\neq0$, $\sigma>0$ and $q$ is real, $\phi$ is given by
\begin{eqnarray}
\p(b^2,s)=\f{1}{\sqrt{\sigma}}\Re\f{1}{\pm\sqrt{C+i2\sqrt{\sigma}D-b^2+s^2}+is}
\end{eqnarray}
In this case, the corresponding general $\ab$-metrics are Bryant's metrics.
\end{enumerate}


\noindent Zhongmin Shen\\
Department of Mathematical Sciences, Indiana University-Purdue University, Indianapolis, IN 46202-3216, USA\\
zshen@math.iupui.edu
\newline
\newline
\newline
\noindent Changtao Yu\\
School of Mathematical Sciences, South China Normal
University, Guangzhou, 510631, P.R. China\\
aizhenli@gmail.com
\end{document}